	\def\makeheadbox{{%
	\hbox to0pt{\vbox{\baselineskip=10dd\hrule\hbox
	to\hsize{\vrule\kern3pt\vbox{\kern3pt
	\hbox{submitted version, compiled in Frankfurt am Main, November 21, 2009}
	\kern3pt}\hfil\kern3pt\vrule}\hrule}%
	\hss}}}
\def\pf#1{{\def\temp{#1} 
              \ifx\temp\empty 
                  \noindent\slshape\textbf{proof:\ }
              \else 
                  \noindent\slshape\textbf{proof of\ #1:\ }
              \fi}}
\def\qed{\hspace*{\fill}$\square$}
\def\Fix#1_#2.{{#2_{#1}}}%
\def\bs#1.{
              \def\temp{#1} 
              \ifx\temp\empty 
                   \mathcal{B}
              \else
                   \mathcal{B}(#1)
              \fi
}
\def\Aut#1_#2.{\mathsf{Aut}_{#2}(#1)}%
\renewcommand{\emph}{\textbf}%
\newcommand{\flatrk}{flat-rank\,}
\begin{document}
\title{
Hyperbolic groups have flat-rank at most~$1$}
\author{Udo Baumgartner, 
R{\"o}gnvaldur G. M{\"o}ller\inst{2}  
\and 
George A. Willis\inst{1}
\thanks{\emph{The first and last author were supported by A.R.C. Grants DP0208137 and LX0349209}}%
}                     
%
\institute{School of Mathematical and Physical Sciences\\
                 University of Newcastle\\
                 University Drive\\
                 Callaghan, NSW 2308\\
                 Australia\\
                 \email {George.Willis@newcastle.edu.au}
                  \and
                 Science Institute\\
                 University of Iceland\\
                 107 Reykjavik\\
                  Iceland\\
                  \email{roggi@raunvis.hi.is}}
\date{}
%
\authorrunning{Baumgartner, M\"oller and Willis}
\maketitle

\begin{abstract}
The \flatrk of a totally disconnected, locally compact group $G$ 
is an integer, 
which is 
an invariant of $G$ 
as a topological group. 
We generalize 
the concept of 
hyperbolic groups 
to the topological context 
and 
show that 
a totally disconnected, locally compact, hyperbolic group 
has \flatrk at most~$1$. 
It follows that 
the simple totally disconnected locally compact groups 
constructed by Paulin and Haglund 
have \flatrk at most~$1$. 
\keywords {totally disconnected group, hyperbolic group, \flatrk\!, automorphism group, scale function}
\subclass{
22D05, 
22D45  
(primary)
%
}
\end{abstract}


\section{Introduction}\label{sec:intro}

\label{section-Intro}
The concept 
of a hyperbolic group 
can be generalized 
to the realm of 
compactly generated, topological groups 
by 
a straightforward 
adaption of 
the definition 
in the discrete case 
(see Definition~\ref{def:hyp-topG} 
on page~\pageref{def:hyp-topG}).
Such a generalization 
is an instance of 
\textit{`geometric group theory 
for topological groups'\/}, which is  
a line 
of investigation 
proposed in~\cite{analog(CayleyGphs(topGs))}.  

This geometric approach 
is a natural one in the case of totally disconnected, locally compact groups, the subject of this paper, and has been pursued previously in 
\cite{struc(tdlcG-graphs+permutations)}, 
\cite{Gphs+Perm+topGs}, 
\cite{tdlcGs-geomObjs}
and \cite{geo-char(flatG(auts))}. 
We take a different line to these papers, however, by studying hyperbolicity and relating it to a structural invariant for totally disconnected, locally compact groups, namely, the \emph{flat-rank}. 

The \flatrk\!  of a totally disconnected, locally compact group (see Definition~\ref{defn:flat-rank}) is a non-negative integer that is analogous to the $k$-rank of a semisimple algebraic group over a local field~$k$.  Indeed the \flatrk\!  and  $k$-rank
coincide when  $G$ is such 
a group, 
by Corollary~19 
in~\cite{flatrk(AutGs(buildings))}.

Just as the $k$-rank 
of a simple algebraic group 
determines 
many properties of the group, the \flatrk\! can be expected to convey important information about  general simple totally disconnected groups. 
An indication of this is seen with the computation of 
the \flatrk\! 
of automorphism groups 
of buildings made 
in~\cite{flatrk(AutGs(buildings))}, where it is shown, in conjunction with 
results from~\cite{geoFlats<CAT0-real(CoxGs+Tits-buildings)}, that if the group action is sufficiently transitive then 
the \flatrk\! 
of the group 
equals 
the rank 
of its building. 
The following theorem, our main result, further demonstrates the relationship between the \flatrk\! and geometric properties of the group. 

\begin{theorem}\label{MainThm-variant}
The \flatrk\! of 
a totally disconnected, locally compact, hyperbolic group 
is at most~$1$. 
\end{theorem}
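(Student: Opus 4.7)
The plan is to argue by contradiction: I would assume that $G$ is a totally disconnected, locally compact, hyperbolic group whose \flatrk is at least $2$, and derive a contradiction. After fixing a compact open subgroup $U \leq G$ and a compact generating set $S$ (which exists because hyperbolic topological groups in the sense of Definition~\ref{def:hyp-topG} are compactly generated), I would form the associated Cayley--Abels graph $\Gamma$ on which $G$ acts continuously with compact open vertex stabilizers; by topological hyperbolicity, $\Gamma$ is Gromov-hyperbolic.

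The assumption on the \flatrk yields, via Willis's structure theory for flat subgroups, a flat subgroup $H \leq G$ of rank $\geq 2$ together with elements $a, b \in H$ whose images in $H/H(1)$ generate a subgroup isomorphic to $\mathbb{Z}^2$, where $H(1)$ denotes the uniscalar part. Both $a$ and $b$ then have scale strictly greater than $1$, hence act on $\Gamma$ as loxodromic isometries with positive translation lengths. Fix a vertex $v_0 \in \Gamma$ whose stabilizer contains a subgroup tidy jointly for $a$ and $b$.

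I would next show that the orbit map $\phi \colon \mathbb{Z}^2 \to V(\Gamma)$, $(m, n) \mapsto a^m b^n \cdot v_0$, is a quasi-isometric embedding. The upper bound on $d_\Gamma(v_0, a^m b^n v_0)$ in terms of $|m| + |n|$ is routine, given positive translation lengths. The lower bound should follow from the flatness of $H$: the common tidy subgroup decomposes jointly under the actions of $a$ and $b$ into pieces on which each acts independently by expansion and contraction, ensuring that the displacement grows linearly in both $|m|$ and $|n|$ rather than collapsing along a diagonal direction.

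Once $\mathbb{Z}^2$ is quasi-isometrically embedded in $\Gamma$, we obtain a contradiction with Gromov-hyperbolicity, since no Gromov-hyperbolic metric space admits such an embedding. The main obstacle is the lower bound in the quasi-isometric embedding: separating the contributions of $a$ and $b$ to the displacement of $v_0$ requires delicate use of the joint tidy subgroup together with the multiplicative behaviour of the scale function on the product $\langle a, b \rangle$, and this is the point at which the structure theory of flat subgroups must be combined most carefully with the metric geometry of $\Gamma$.
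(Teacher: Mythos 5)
Your strategy is genuinely different from the paper's, and as written it has a gap at the decisive step. The paper never attempts to embed $\mathbb{Z}^2$ quasi-isometrically into the rough Cayley graph; it instead invokes Woess's classification of isometric actions on hyperbolic spaces \cite{woess} and bounds the rank of a flat subgroup $H$ case by case: rank $0$ if $H$ contains a free group of hyperbolic elements (Proposition~\ref{prop:F2<flatG=>rk=0}) or has a bounded orbit (Proposition~\ref{prop:flatG-stab-nonempty-cpS=>rk=0}), and rank at most $1$ if $H$ fixes one or two boundary points (Proposition~\ref{prop:flatG-fix-1OR2boundary-pts=>rk le1}), the last case being settled by a pigeonhole argument showing that two hyperbolic elements with a common fixed point at infinity are dependent modulo $H_1$. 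Your direct approach is closer in spirit to the proof of Theorem~\ref{thm:altMainThm} in Section~\ref{sec:hyperbolic orbits}, where a flat group of rank $r$ is shown to produce an isometric copy of a rank-$r$ lattice --- but there the lattice lives in the orbit inside the space of compact open subgroups with its index metric, not in the rough Cayley graph, and bridging that difference is exactly what you have not done.

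Concretely, the gap is the lower bound $d_\Gamma(v_0,a^mb^nv_0)\gtrsim |m|+|n|$, and the justification you offer does not work as stated: joint tidiness of a compact open subgroup for $a$ and $b$ controls indices of subgroups, i.e.\ scales, and says nothing directly about distances in $\Gamma$. In particular it does not by itself exclude that $d_\Gamma(v_0,a^mb^{-m}v_0)$ stays bounded; this ``diagonal collapse'' is precisely what Proposition~\ref{prop:flatG-fix-1OR2boundary-pts=>rk le1} shows \emph{does} occur when two hyperbolic elements share a fixed boundary point, so it must be excluded by an argument. Two ingredients would close the gap. First, the structure theory of flat groups (Theorems~6.12 and~6.14 of \cite{tidy<:commAut(tdlcG)}, packaged in the paper as Lemma~\ref{norm>flat-gp}) provides a finite set $\Phi$ of homomorphisms $\rho\colon H\to\mathbb{Z}$ with $\bigcap_{\rho\in\Phi}\ker\rho=H_1$ and $\log\bigl(s_G(\alpha)s_G(\alpha^{-1})\bigr)=\sum_{\rho\in\Phi}\log(t_\rho)|\rho(\alpha)|$, which exhibits $\alpha\mapsto\log\bigl(s_G(\alpha)s_G(\alpha^{-1})\bigr)$ as the restriction of a genuine norm on $(H/H_1)\otimes\mathbb{R}$, hence bounded below by a positive multiple of $|m|+|n|$ on $\langle\bar a,\bar b\rangle\cong\mathbb{Z}^2$. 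Second, one must convert scale into displacement: M{\"o}ller's formula $s_G(h)=\lim_n|G_{v_0}.(h^{-n}.v_0)|^{1/n}$ (Theorem~7.7 of \cite{struc(tdlcG-graphs+permutations)}, used in the paper's Lemma~\ref{lem:Gv notfix rep.pt(elt.scale>1)}) combined with the uniform valency bound in the locally finite vertex-transitive graph $\Gamma$ gives $d_\Gamma(v_0,h.v_0)\geq c\log\bigl(s_G(h)s_G(h^{-1})\bigr)$. With these two steps supplied, your argument does yield an alternative proof (the upper bound is just the triangle inequality, so you do not in fact need the elements to be loxodromic, nor the nontrivial no-parabolics theorem; also note that independence of $a,b$ modulo $H_1$ only guarantees $s_G(a)s_G(a^{-1})>1$, not $s_G(a)>1$, though this is harmless). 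As it stands, however, the key step is asserted rather than proved.
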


The major part 
of this paper 
is devoted to 
the proof of 
Theorem~\ref{MainThm-variant}. 
It is clear that the converse to this theorem does not hold, because discrete groups have \flatrk\!~0 and need not be hyperbolic. However, it may hold in the presence of further hypotheses that exclude discrete groups or non-discrete counterexamples based on them.

The properties of algebraic groups 
of $k$-rank $1$ 
differ notably 
from the properties 
of groups of higher $k$-rank. In the expectation that the same will be true of the \flatrk\!,   a secondary aim of this paper is to seek further geometric
criteria 
for a totally disconnected, locally compact group 
to have 
\flatrk\! 
at  most~$1$. We establish two such criteria. 

One is 
based on the action of the group on
the space of compact open subgroups 
of the group.  
The criterion 
and its proof 
are
in the spirit 
of
the papers~\cite{flatrk(AutGs(buildings))} 
and~\cite{geo-char(flatG(auts))}; 
the proof 
is contained 
in Section~\ref{sec:hyperbolic orbits}.

\begin{theorem}\label{thm:altMainThm}
Let $\mathcal{A}$ be 
a group of automorphisms 
of the totally disconnected locally compact group $G$. 
Suppose that $\mathcal{A}$ has a hyperbolic orbit 
in the space of compact open subgroups of $G$. 
Then the \flatrk\! of $\mathcal{A}$ 
is at most $1$. 
\end{theorem}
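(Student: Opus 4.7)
The plan is to argue by contradiction. Suppose the \flatrk of $\mathcal{A}$ is at least~$2$. By Definition~\ref{defn:flat-rank} there exists a flat subgroup of $\mathcal{A}$ whose rank modulo uniscalar elements is at least~$2$; after extracting a suitable free abelian subgroup we may fix $H \cong \mathbb{Z}^n$ with $n \geq 2$ on which no nontrivial element is uniscalar, together with a compact open subgroup $U \leq G$ tidy for every element of $H$. The strategy is to use these data to produce a quasi-isometrically embedded copy of $\mathbb{Z}^n$ inside the hyperbolic orbit, and then to derive a contradiction from the non-existence of flats in $\delta$-hyperbolic spaces.

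First I would equip the space of compact open subgroups of $G$ with its natural $\mathcal{A}$-invariant metric $d(U_1, U_2) := \log [U_1 : U_1 \cap U_2] + \log [U_2 : U_1 \cap U_2]$. Let $\mathcal{O}$ denote the hyperbolic $\mathcal{A}$-orbit; by translating we may assume $U \in \mathcal{O}$, so that the $H$-orbit $H \cdot U$ lies inside $\mathcal{O}$. The key technical step is to verify that the map $h \mapsto h \cdot U$ is a quasi-isometric embedding of $H$, equipped with the word metric for any finite generating set, into $(\mathcal{O}, d)$. This step rests on the flat-subgroup theory developed in~\cite{flatrk(AutGs(buildings))} and~\cite{geo-char(flatG(auts))}: because $U$ is tidy for all of $H$, the displacement $h \mapsto d(U, h \cdot U)$ extends to a norm on $H \otimes \mathbb{R} \cong \mathbb{R}^n$ that is expressible in terms of the logarithm of the scale function, and this norm is bi-Lipschitz equivalent to the standard one on $\mathbb{Z}^n$.

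Once a quasi-isometric embedding $\mathbb{Z}^n \hookrightarrow \mathcal{O}$ with $n \geq 2$ is in hand, the proof is concluded by a standard fact from coarse geometry: a $\delta$-hyperbolic geodesic metric space admits no quasi-isometric embedding of $\mathbb{Z}^2$, since such an embedding would supply arbitrarily large flat quadrilaterals that violate the slim-triangles condition. This contradiction forces the \flatrk of $\mathcal{A}$ to be at most~$1$.

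The main obstacle is the middle step: converting the algebraic flatness of $H$ into a bona fide geometric flat inside $\mathcal{O}$. This requires careful use of the scale function — in particular, of the fact that on a flat subgroup with common tidy subgroup the logarithm of the scale is piecewise linear and becomes additive on suitable half-spaces of $H$ — together with the comparison between the resulting norms and the word metric on $\mathbb{Z}^n$. Once those estimates are established, the hyperbolicity obstruction dispatches the remainder of the argument immediately.
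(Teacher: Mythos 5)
Your overall strategy --- extract a flat subgroup of rank at least $2$, realize its orbit as a lattice-like subset of the hyperbolic orbit, and contradict hyperbolicity --- is the same as the paper's. But two steps fail as written, and both failures stem from one overlooked point: an orbit in the space of compact open subgroups is a countable, non-geodesic metric space, so ``hyperbolic'' here can only mean the four-point Gromov-product condition, and that condition is \emph{not} invariant under general quasi-isometric embeddings (a multiplicative distortion of distances distorts the Gromov product uncontrollably, because the product involves a difference of distances). The first casualty is the reduction ``by translating we may assume $U\in\mathcal{O}$'': orbits are disjoint, and no element of $\mathcal{A}$ moves your tidy subgroup $U$ into the given hyperbolic orbit. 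What you need is that the orbit $\mathcal{A}.U$ is itself hyperbolic; the paper gets this from Lemma~\ref{orbits_quasi-iso} and Corollary~\ref{some-orbit_hyp=all-orbits_hyp}, and the multiplicative constant $1$ in the $(1,\epsilon)$-quasi-isometry between orbits is essential there, since only $(1,\epsilon)$-embeddings transfer the four-point condition (Lemma~\ref{(1,-)-qi-embed preserves_hyp}).

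The second casualty is your closing step, which invokes the non-existence of quasi-isometrically embedded copies of $\mathbb{Z}^2$ in a $\delta$-hyperbolic \emph{geodesic} space via ``flat quadrilaterals violating the slim-triangles condition.'' The orbit is not geodesic, there are no geodesic triangles to be slim, and a merely bi-Lipschitz copy of $\mathbb{Z}^2$ does not by itself contradict the four-point condition in a general metric space. The repair is to use the stronger fact you have already half-established: with $O$ minimizing for the flat subgroup $H$ of rank $r\ge 2$, the orbit $H.O$ is \emph{isometric} --- not merely quasi-isometric --- to a lattice in an $r$-dimensional normed space, via $d(O,\alpha.O)=\log\bigl(s_G(\alpha)s_G(\alpha^{-1})\bigr)=\sum_{\rho}\log(t_\rho)|\rho(\alpha)|$ (the paper's Lemma~\ref{norm>flat-gp}); the four-point condition passes to subspaces isometrically (Corollary~\ref{subspace(hyp)=hyp}); and one then checks directly that such a lattice violates the four-point inequality for every $\delta$. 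The paper does this last verification by a scaling argument: find a quadruple with $\min\{(y\cdot x)_w,(x\cdot z)_w\}-(y\cdot z)_w>0$, which exists whenever $\|x+y\|+\|x-y\|>\|x\|+\|y\|$ for some lattice vectors, and dilate by integers. With these two substitutions your argument closes and coincides with the paper's.
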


The last criterion 
follows from 
results in~\cite{direction(aut(tdlcG))}, where the space of directions of a totally disconnected, locally compact group is defined; 
see 
page~\pageref{pref:pf(thm:flatrk-via-directions)} 
for 
its proof.  

\begin{theorem}\label{thm:flatrk-via-directions}
Let~$G$ be 
a totally disconnected, locally compact group 
whose space of directions 
is discrete. 
Then 
the \flatrk\! 
of~$G$ 
is at most~$1$. If the space of directions is not empty, then the \flatrk\! is exactly~1.
\end{theorem}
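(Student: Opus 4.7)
The plan is to derive Theorem~\ref{thm:flatrk-via-directions} directly from the correspondence, established in~\cite{direction(aut(tdlcG))}, between flat subgroups of $G$ and continuously parametrised families of directions. I shall use two facts from that paper. First, every flat subgroup $H\leq G$ of rank $n\geq 1$ gives rise, via the contraction subgroups of its elements, to a map from the unit sphere $S^{n-1}$ in the real vector space $(H/H_u)\otimes_{\mathbb{Z}}\mathbb{R}$ (where $H_u$ is the uniscalar part of $H$) to the space of directions~$\partial G$. Second, this map is continuous with respect to the topology on $\partial G$, and distinct half-lines are sent to distinct directions, so the map is in particular non-constant when $n\geq 1$.

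Granted this setup, the bound on the \flatrk is a simple topological observation. Suppose for contradiction that the \flatrk of $G$ is at least $2$, and fix a flat subgroup $H\leq G$ of rank $n\geq 2$. The associated map $\sigma_H\colon S^{n-1}\to\partial G$ is then continuous, with at least two points in its image. Since $n\geq 2$, the sphere $S^{n-1}$ is connected; but any continuous map from a connected space to a discrete space is constant. This contradiction shows that every flat subgroup of $G$ has rank at most $1$, i.e., the \flatrk of $G$ is at most~$1$.

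For the second assertion, assume that $\partial G$ is nonempty. Then some element $x\in G$ determines a non-trivial direction in $\partial G$, which in particular forces the scale of $x$ to exceed $1$. The closure $\overline{\langle x\rangle}$ is then a flat subgroup of $G$ whose quotient by its uniscalar part is infinite cyclic, so it has rank exactly $1$. Thus the \flatrk of $G$ is at least~$1$, and combined with the previous paragraph it equals~$1$.

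The main obstacle is purely one of bookkeeping: one needs to invoke the precise continuity and injectivity properties of the parametrisation $\sigma_H$ proved in~\cite{direction(aut(tdlcG))}, and verify that ``discrete space of directions'' in the statement of the theorem is taken in the same topology under which $\sigma_H$ is continuous. Once these definitional matters are aligned, the topological argument above is immediate; no further structural input about $G$ is required.
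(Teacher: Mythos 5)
Your argument is correct and takes essentially the same route as the paper: the paper also derives the bound from \cite{direction(aut(tdlcG))}, citing its Proposition~23 (the space of directions of a group of flat rank~$k$ contains a $k$-cell --- which is your continuous parametrisation $\sigma_H$ packaged into one statement) and then applying the same connectedness-versus-discreteness observation. For the second assertion the paper simply quotes from the same source that a group has flat rank~$0$ if and only if its space of directions is empty, which is exactly what your element of non-trivial scale and its rank-one flat cyclic subgroup establish.
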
 

We do not know of a  
hyperbolic group whose space of directions is not discrete. In view of Theorem~\ref{thm:flatrk-via-directions}, it would be a strengthening of Theorem~\ref{MainThm-variant} to show that all hyperbolic groups have discrete spaces of directions.

\section{Basic concepts}\label{sec:basics}

\begin{definition}[hyperbolic group {[topological version]}]
\label{def:hyp-topG}
A topological group 
is called \emph{hyperbolic} 
if and only if 
it 
is compactly generated 
and 
its Cayley graph 
with respect to some 
(hence any)  
compact generating set 
is Gromov-hyperbolic. 
\end{definition} 

The definition makes sense,  
because 
all Cayley graphs 
with respect to 
compact generating sets 
are quasi-isometric 
by 
part~(\romannumeral1) of 
Lemma~4.6 
in~\cite{Gphs+Perm+topGs}. 
The same definition 
is used 
in recent work 
by 
Yves Cornulier 
and 
Romain Tessera
\cite{contrAut+L^p-Cohom<deg1}, 
where 
they characterize 
certain classes 
of non-discrete 
Gromov-hyperbolic groups.  

%
In this paper 
we consider compactly generated,
totally disconnected, locally compact topological groups 
only. 
These groups 
admit 
a locally finite, connected graph 
with a vertex-transitive action 
by the group 
such that 
vertex-stabilizers 
are 
compact and open. 
Such a graph 
with 
an action 
by the group 
is an instance of 
a so-called 
{rough Cayley graph},  
a concept 
introduced 
in~\cite{analog(CayleyGphs(topGs))}. 
We now 
define this concept. 

\begin{definition}\label{def:rough-Gayley-Gph}
Let~$G$ be 
a topological group. 
A connected graph~$X$ 
is said to be 
\emph{a rough Cayley graph} of~$G$,  
if 
$G$ acts transitively 
on the vertex set 
of~$X$ 
and 
the stabilizers of vertices 
are compact open subgroups 
of~$G$. 
\end{definition}

The proof 
of our main result relies on the existence of a rough Cayley graph for the groups under consideration. The relevant result is 
Theorem~2.2+ in~\cite{analog(CayleyGphs(topGs))}, or Corollary~1 in~\cite{FC-ele<tdGs&auts(infGphs)}, 
which we restate for ease of reference. 
In the formulation, 
$VX$ 
denotes 
the vertex set 
of~$X$. 

\begin{theorem}[Existence of a locally finite, rough Cayley graph]
\label{thm:existence(lf-roughCayleyGph)}
Let~$G$ be 
a totally disconnected, compactly generated, locally compact group. 
Then 
there is 
a locally finite, connected graph~$X$ 
such that: 
\begin{itemize}
\item[(\romannumeral1)] 
$G$ acts as 
a group of automorphisms 
on~$X$ 
and 
is transitive 
on~$VX$;
\item[(\romannumeral2)]
for every vertex~$v$ in~$X$ 
the subgroup~$G_v$ 
is 
compact and open 
in~$G$;
\item[(\romannumeral3)]
if~$\Aut X_.$ 
is equipped with 
the permutation topology,  
then 
the homomorphism 
$\pi\colon G\to \Aut X_.$ 
given by 
the action 
of~$G$ 
on~$X$ 
is continuous, 
the kernel of 
this homomorphism
is compact 
and 
the image 
of~$\pi$ 
is closed 
in~$\Aut X_.$.
\end{itemize}
Conversely, 
if~$G$ 
acts as 
a group of automorphisms on 
a locally finite, connected graph~$X$ 
such that~$G$ is 
transitive on 
the vertex set 
of~$X$ 
and 
the stabilizers of 
the vertices 
in~$X$ 
are
compact and open, 
then~$G$ is 
compactly generated.
\end{theorem}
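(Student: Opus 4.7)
The plan is to prove the forward direction by an explicit coset-graph construction and the converse by a standard Schreier-generator argument.

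For the forward direction, I would first apply van Dantzig's theorem to produce a compact open subgroup $U\leq G$. Compact generation of $G$ lets me pick a compact symmetric generating set $S$; replacing $S$ by $USU$ (still compact, as a continuous image of $U\times S\times U$, and still a generating set) I may assume $U\subseteq S$ and $S=USU$, so that $S$ is a union of double cosets of $U$. I then define $X$ to have vertex set $VX:=G/U$ and to have an edge between $gU$ and $hU$ precisely when $gU\neq hU$ and $g^{-1}h\in S$. The left translation action of $G$ on $G/U$ preserves this edge relation by construction, so $G$ acts as automorphisms of $X$ and transitively on $VX$; this gives~(\romannumeral1). The stabilizer of the base vertex $U$ is $U$ itself, hence compact and open, and the same holds for every vertex by transitivity, giving~(\romannumeral2).

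The key technical checks are local finiteness and connectedness. The neighbours of $U$ are indexed by the cosets of $U$ contained in $S$; since $S$ is compact and $U$ is open, only finitely many cosets meet~$S$, so $X$ is locally finite. Connectedness follows from the fact that $S$ generates $G$: writing $g=s_1\cdots s_n$ one gets a path $U,\,s_1U,\,s_1s_2U,\ldots,gU$ in~$X$. For~(\romannumeral3), continuity of $\pi\colon G\to\Aut X_.$ holds because the preimage of the pointwise stabilizer of any finite set of vertices is an intersection of conjugates of~$U$ intersected with cosets, hence open; the kernel of $\pi$ is $\bigcap_{g\in G}gUg^{-1}$, which is a closed subgroup of the compact group~$U$ and therefore compact. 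For closedness of the image, observe that the compact stabilizer $\pi(U)=\pi(G)_{U}$ coincides with $\pi(G)\cap\Aut X_._{U}$ and is already closed in $\Aut X_.$; combining this with transitivity of $\pi(G)$ on $VX$ (which is also enjoyed by $\overline{\pi(G)}$) and a straightforward orbit-stabilizer argument shows that every element of the closure already lies in $\pi(G)$, so $\pi(G)$ is closed.

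For the converse, suppose $G$ acts transitively on a locally finite connected graph $X$ with compact open vertex stabilizers. Fix $v_0\in VX$, set $U:=G_{v_0}$, enumerate the finitely many neighbours $v_1,\dots,v_d$ of $v_0$, and choose $g_i\in G$ with $g_i\cdot v_0=v_i$. I claim that the compact set $T:=U\cup\{g_1,\dots,g_d\}\cup\{g_1^{-1},\dots,g_d^{-1}\}\cdot U$ generates~$G$. Indeed, by connectedness of $X$ every vertex is reached from $v_0$ by a finite edge-path, and a short induction on path length shows that the corresponding transporter in $G$ is a product of the $g_i^{\pm1}$ with an element of $U$; transitivity and the identity $G=\bigl(\bigcup_{v\in VX}\{g\in G:gv_0=v\}\bigr)$ then imply $G=\langle T\rangle$.

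The main obstacle in the forward direction is the closedness of~$\pi(G)$; this is where I expect to invoke the cited results most heavily, as the neat orbit-stabilizer argument sketched above requires verifying that taking closures in $\Aut X_.$ does not enlarge the stabilizer of $v_0$ beyond $\pi(U)$, which in turn uses that $\pi(U)$ is already compact and hence closed in the vertex stabilizer of the ambient permutation group.
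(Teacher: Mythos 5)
The paper does not actually prove this theorem: it is imported verbatim from the literature (Theorem~2.2+ of Kr\"on--M\"oller, or Corollary~1 of M\"oller), so there is no internal proof to measure yours against. Your argument is, however, essentially the standard construction underlying those references, and it is correct. The forward direction via the coset graph on $G/U$ with $U$ compact open (van Dantzig) and $S=USU$ symmetric is exactly right; the role of the normalisation $S=USU$, $S=S^{-1}$ is to make the edge relation ``$gU\sim hU$ iff $g^{-1}h\in S$'' well defined and symmetric on cosets, and that one-line verification is worth writing out explicitly. Local finiteness (compact $S$ meets only finitely many left $U$-cosets), connectedness (paths from word decompositions in $S$), continuity of $\pi$ (preimages of point stabilizers are conjugates of $U$), and compactness of $\ker\pi=\bigcap_g gUg^{-1}\leqslant U$ are all fine. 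Your treatment of the only delicate point, closedness of $\pi(G)$ in $\mathsf{Aut}(X)$, is also sound: $\pi(U)$ is compact, hence closed, hence equal to the stabilizer of the base vertex in $\overline{\pi(G)}$ (any element of the closure fixing that vertex is approximated by elements of $\pi(G)$ fixing it, i.e.\ by elements of $\pi(U)$), and vertex-transitivity of $\pi(G)$ then forces $\overline{\pi(G)}=\pi(G)$. The converse via Schreier generators $U\cup g_1U\cup\dots\cup g_dU$ and induction on graph distance is the standard argument and is complete. Two cosmetic points: the assertion ``$U\subseteq S$ after replacing $S$ by $USU$'' requires that $S$ already meet $U$ (e.g.\ first replace $S$ by $S\cup U$), although nothing later depends on it; and in the converse it is cleaner to phrase the induction as ``for every vertex $v$ there is $h\in\langle T\rangle$ with $h\cdot v_0=v$,'' after which an arbitrary $g$ with $g\cdot v_0=v$ lies in $hU\subseteq\langle T\rangle$.
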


For 
a totally disconnected, locally compact group 
hyperbolicity 
can be 
formulated 
in terms of 
any of 
its 
rough Cayley graphs 
as follows. 

\begin{proposition}[hyperbolicity in terms of the rough Cayley graph] 
\label{prop:hyperbolicity(roughCayley-graph)}
A totally disconnected, locally compact group 
is hyperbolic 
if and only if 
some 
(hence any) 
of its rough Cayley graphs 
is hyperbolic. 
\end{proposition}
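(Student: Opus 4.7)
The plan is to reduce the proposition to the quasi-isometry invariance of Gromov hyperbolicity. The key intermediate claim is that any locally finite rough Cayley graph of~$G$, equipped with its graph metric, is quasi-isometric to~$G$ endowed with the word metric $d_S$ induced by any compact generating set~$S$. Granted that, both the ``some iff any'' clause and the equivalence with hyperbolicity of~$G$ in the sense of Definition~\ref{def:hyp-topG} follow at once, because quasi-isometry is transitive and preserves Gromov hyperbolicity of graph-metric spaces.

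First, I would dispose of the compact generation hypothesis, which is implicit in Definition~\ref{def:hyp-topG} but not in the definition of a rough Cayley graph. If $G$ is hyperbolic, then $G$ is compactly generated by Definition~\ref{def:hyp-topG}, and Theorem~\ref{thm:existence(lf-roughCayleyGph)} furnishes at least one locally finite rough Cayley graph. Conversely, once one has a locally finite rough Cayley graph of~$G$, the converse clause of Theorem~\ref{thm:existence(lf-roughCayleyGph)} gives that $G$ is compactly generated, so a word metric $d_S$ with $S$ a compact generating set is available. In both directions we may therefore restrict attention to locally finite rough Cayley graphs.

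The heart of the proof is the \v{S}varc--Milnor type quasi-isometry between $(G,d_S)$ and a locally finite rough Cayley graph~$X$. Fix a base vertex $v_0$; vertex transitivity makes the orbit map $g\mapsto gv_0$ coarsely surjective, and by enlarging~$S$ to absorb $G_{v_0}$ together with representatives for the finitely many $G_{v_0}$-orbits on neighbours of $v_0$ (finite in number because $X$ is locally finite), one obtains Lipschitz estimates in both directions between $d_S$ and~$d_X$. This is essentially Lemma~4.6 of~\cite{Gphs+Perm+topGs}, a part of which is already invoked in the discussion following Definition~\ref{def:hyp-topG} to identify Cayley graphs for different compact generating sets up to quasi-isometry. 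I expect this \v{S}varc--Milnor adaptation to be the only substantive step; with it in place, quasi-isometry invariance of Gromov hyperbolicity closes the proof.
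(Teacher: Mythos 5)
Your proposal is correct and follows essentially the same route as the paper: the paper's proof is a one-line appeal to the quasi-isometry of rough Cayley graphs (Theorem~4.5 in~\cite{Gphs+Perm+topGs}, or Theorem~2.7 in~\cite{analog(CayleyGphs(topGs))}) combined with quasi-isometry invariance of Gromov hyperbolicity, which is exactly the reduction you make. Your \v{S}varc--Milnor sketch and the bookkeeping about compact generation simply unpack the cited results rather than constituting a different argument.
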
 
\begin{proof} 
The claim 
is implied by 
the quasi-isometry 
of rough Cayley graphs; 
see Theorem~4.5 
in~\cite{Gphs+Perm+topGs} 
or 
Theorem~2.7 
in~\cite{analog(CayleyGphs(topGs))}. 
\qed 
\end{proof}

The \flatrk\! 
of a group $\mathcal{A}$ of automorphisms of a 
totally disconnected locally compact group $G$ 
was introduced in \cite{tidy<:commAut(tdlcG)}, although it was not given that name there. 
Some auxiliary definitions and results will be required for its definition and in later sections. 
\begin{definition}
\label{defn:minimizing;flat}
Let $G$ be a totally disconnected, locally compact group. 
\begin{itemize}
\item[(\romannumeral1)]  The \emph{scale} of the automorphism, $\alpha$, of $G$ is the positive integer
\begin{equation}
\label{eq:scaledefn}
s_G(\alpha) := 
\min\left\{ |\alpha(O)\colon \alpha(O)\cap O| \colon O\leqslant G\text{ compact and open}\right\}.
\end{equation}
\item[(\romannumeral2)]  The compact, open subgroup $O$ is \emph{minimizing} for $\alpha$ if the minimum index in {\rm(\ref{eq:scaledefn})} is attained at  $O$.
\item[(\romannumeral3)] The group, ${\mathcal A}$ of automorphisms of $G$ is \emph{flat} if there is a compact open subgroup $O\leqslant G$ that is minimizing for every $\alpha\in {\mathcal A}$.
\end{itemize}
\end{definition}

\begin{theorem}[\cite{tidy<:commAut(tdlcG)}, Corollary 6.15]
\label{thm:flat}
Let ${\mathcal A}$ be a flat group of automorphisms of $G$ and $O$ be minimizing for ${\mathcal A}$. Then ${\mathcal A}_1 := \left\{\alpha\in{\mathcal A} \colon \alpha(O) = O\right\}$ is a normal subgroup of ${\mathcal A}$, and ${\mathcal A}/{\mathcal A}_1$ is a free abelian group. 
\end{theorem}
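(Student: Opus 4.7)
The plan is to prove the two assertions separately. Normality of $\mathcal{A}_1$ in $\mathcal{A}$ follows quickly from conjugation-invariance of the scale function, while identifying $\mathcal{A}/\mathcal{A}_1$ as a free abelian group requires the tidy-subgroup machinery of Willis, refined so that a single decomposition of $O$ accommodates every element of $\mathcal{A}$ simultaneously.

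For normality, first observe that for any bicontinuous automorphism $\beta$ of $G$ the bijection $H \mapsto \beta(H)$ on compact open subgroups preserves indices, so $s_G(\beta\alpha\beta^{-1}) = s_G(\alpha)$ for every $\alpha$. Using that $O$ is minimizing for every element of $\mathcal{A}$, one obtains the characterization
\[
\mathcal{A}_1 = \{\alpha \in \mathcal{A} : s_G(\alpha) = s_G(\alpha^{-1}) = 1\},
\]
since $s_G(\alpha) = 1$ attained at the minimizing $O$ forces $\alpha(O) \subseteq O$, and the reverse inclusion follows from $s_G(\alpha^{-1}) = 1$. Since the right-hand side is conjugation-invariant within $\mathcal{A}$, the subgroup $\mathcal{A}_1$ is normal.

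For the quotient, each $\alpha \in \mathcal{A}$ comes equipped with a tidy factorisation $O = O_+(\alpha)\, O_-(\alpha)$, in which $\alpha$ expands $O_+(\alpha)$ and contracts $O_-(\alpha)$. Because the \emph{same} $O$ is minimizing for every element of $\mathcal{A}$, I would intersect these factorisations to produce a common decomposition of $O$ into finitely many pieces $P_1, \ldots, P_m$ on each of which every $\alpha \in \mathcal{A}$ acts by a well-defined rational scaling factor $\lambda_i(\alpha) \in \mathbb{Q}^+$. The joint map
\[
\Lambda : \mathcal{A} \longrightarrow (\mathbb{Q}^+)^m, \qquad \alpha \longmapsto (\lambda_1(\alpha), \ldots, \lambda_m(\alpha))
\]
is then a homomorphism whose kernel is exactly $\mathcal{A}_1$. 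Since $\mathbb{Q}^+$ is free abelian on the primes and subgroups of free abelian groups are free abelian, the image, and hence $\mathcal{A}/\mathcal{A}_1$, is free abelian.

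The main obstacle is constructing the common piecewise decomposition and verifying that the displacement map $\Lambda$ really is a homomorphism. The scale function is only submultiplicative on arbitrary pairs of automorphisms, and flatness — the existence of a single subgroup $O$ minimising every element of $\mathcal{A}$ — is precisely the hypothesis that should promote this to exact multiplicativity on each piece. Rigorously establishing that the data $O_\pm(\alpha)$ for varying $\alpha$ intersect into a compatible, $\mathcal{A}$-stable lattice of pieces, and that the induced scaling factors are genuinely multiplicative under composition within $\mathcal{A}$, constitutes the technical heart of the argument.
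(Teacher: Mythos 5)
First, note that the paper does not prove this statement at all: it is imported verbatim from Willis \cite{tidy<:commAut(tdlcG)} (Corollary~6.15), so there is no internal proof to compare against; your attempt has to be judged on its own. The normality half of your argument is correct and complete. Conjugation-invariance of the scale, $s_G(\beta\alpha\beta^{-1})=s_G(\alpha)$, holds because $\beta(\alpha(V))\cap\beta(V)=\beta(\alpha(V)\cap V)$ and $\beta$ preserves indices, and your characterization $\mathcal{A}_1=\{\alpha\in\mathcal{A}: s_G(\alpha)=s_G(\alpha^{-1})=1\}$ is valid precisely because $O$ is minimizing for $\alpha$ \emph{and} for $\alpha^{-1}$ (both lie in $\mathcal{A}$), so $s_G(\alpha)=1$ forces $\alpha(O)\subseteq O$ and $s_G(\alpha^{-1})=1$ forces the reverse inclusion. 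This is a clean route to normality.

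The second half, however, contains a genuine gap, and you have in effect flagged it yourself: the existence of the common decomposition $O=P_1\cdots P_m$ and the multiplicativity of the displacement map $\Lambda$ are asserted, not proved, and they are exactly the content of Willis's Theorems~6.8--6.14 (the machinery this paper quotes again in Lemma~\ref{norm>flat-gp} as the finite set $\Phi$ of homomorphisms $\rho\colon\mathcal{H}\to\mathbb{Z}$ with $s_G(\alpha)=\prod_{\rho(\alpha)>0}t_\rho^{\rho(\alpha)}$). Concretely: (i) intersecting the tidy factorisations $O=O_+(\alpha)O_-(\alpha)$ over all $\alpha\in\mathcal{A}$ ranges over infinitely many elements, and nothing in your sketch shows the resulting lattice of pieces is finite, stable under every $\alpha$, or even that the intersections again factorise $O$ --- establishing a subgroup simultaneously tidy for several automorphisms and extracting finitely many ``factors'' is the hard part of Willis's paper; (ii) the scale itself is \emph{not} multiplicative (already $s_G(\alpha)s_G(\alpha^{-1})\neq 1=s_G(\mathrm{id})$ whenever $s_G(\alpha)>1$), so the claim that each $\lambda_i$ is a homomorphism cannot be waved through by appealing to flatness; producing genuinely additive root functionals is precisely what the tidy-factor analysis delivers, and it is the step your proposal leaves blank; (iii) the identification $\ker\Lambda=\mathcal{A}_1$ likewise depends on the unproved construction. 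The closing logic (image of $\Lambda$ sits in a free abelian group, subgroups of free abelian groups are free abelian) is fine, but it rests entirely on the part that is missing. As it stands, the proposal is a correct high-level outline of Willis's strategy rather than a proof.
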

The group ${\mathcal A}_1$ is independent of the minimizing subgroup used to define it.
\begin{definition}
\label{defn:flat-rank}
Let $G$ be a totally disconnected, locally compact group.
\begin{itemize}
\item[(\romannumeral1)] The \emph{rank} of the flat group, ${\mathcal A}$ of automorphisms of $G$ is the rank of the free abelian group ${\mathcal A}/{\mathcal A}_1$. 
\item[(\romannumeral2)] The \emph{\flatrk} of a group ${\mathcal A}$ of automorphisms of $G$ is the supremum of the  ranks of all the flat subgroups of  ${\mathcal A}$. 
\item[({\romannumeral3})] The \emph{\flatrk} of $G$ is the \flatrk of the group of inner automorphisms. 
\end{itemize}
\end{definition}

\section{Constructing hyperbolic topological groups}\label{sec:supplement}

The following proposition 
provides a method 
to construct 
totally disconnected, locally compact, hyperbolic groups. 
For example 
one might take 
for~$X$ 
the Cayley graph 
of any 
discrete hyperbolic group 
and 
let~$G$ be 
its full automorphism group 
with the permutation topology 
(equivalently, 
the compact-open topology). 
Further applications 
of this result 
will be provided 
in Section~\ref{sec:examples}. 

\begin{proposition}
\label{prop:hyperbolicity(cocp-<Gs(Aut(lf-Gromov-hyp-complexes))}
Let $G$ be 
a totally disconnected, locally compact group 
acting cocompactly 
and 
with compact, open point stabilizers on 
a locally finite, connected Gromov-hyperbolic complex, 
$X$ say. 
Then 
the $1$-skeleton 
of~$X$ 
is 
quasi-isometric to 
a locally finite, connected, rough Cayley graph 
for~$G$ 
which is also Gromov-hyperbolic, 
and 
$G$ is a hyperbolic group.  
\end{proposition}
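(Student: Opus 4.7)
\medskip
\noindent\textbf{Proof plan.} The plan is to descend from the complex $X$ to its $1$-skeleton $X^{(1)}$, extract a rough Cayley graph $Y$ for $G$ using Theorem~\ref{thm:existence(lf-roughCayleyGph)}, and then transfer Gromov-hyperbolicity from $X^{(1)}$ to $Y$ via two quasi-isometries.

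First I would observe that the inclusion $X^{(1)}\hookrightarrow X$ is a quasi-isometry. Because $G$ acts cocompactly and cellularly on the locally finite complex $X$, the cells fall into finitely many $G$-isometry classes and hence have uniformly bounded diameter, so every point of $X$ lies within bounded distance of $X^{(1)}$. Gromov-hyperbolicity being a quasi-isometry invariant of geodesic spaces, $X^{(1)}$ is a locally finite, connected, Gromov-hyperbolic graph on which $G$ still acts cocompactly with compact open vertex stabilizers.

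Next I would verify that $G$ is compactly generated, by a standard \v{S}varc--Milnor-type argument in this setting. Pick vertex-orbit representatives $v_1,\dots,v_n$ in $X^{(1)}$ with (compact, open) stabilizers $K_i:=G_{v_i}$; then the set
\[
\Bigl(\bigcup_i K_i\Bigr)\cup\bigl\{g\in G : d(v_i, g v_j)\leq 1\text{ for some }i,j\bigr\}
\]
is a finite union of double cosets, hence compact, and generates $G$ by connectedness of $X^{(1)}$. Theorem~\ref{thm:existence(lf-roughCayleyGph)} then produces a locally finite, connected, vertex-transitive rough Cayley graph $Y$ for $G$.

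The remaining step is to compare $Y$ with $X^{(1)}$. This comparison is precisely the content of Theorem~4.5 of~\cite{Gphs+Perm+topGs} (the result already invoked in the proof of Proposition~\ref{prop:hyperbolicity(roughCayley-graph)}): any two locally finite, connected graphs on which $G$ acts cocompactly with compact open vertex stabilizers are $G$-equivariantly quasi-isometric via any orbit map. Hyperbolicity of $X^{(1)}$ therefore transfers to $Y$, and Proposition~\ref{prop:hyperbolicity(roughCayley-graph)} yields that $G$ is a hyperbolic group. The main technical point is this last quasi-isometry, because a rough Cayley graph is by definition vertex-transitive while the $G$-action on $X^{(1)}$ generally is not; that discrepancy is handled precisely by the cited \v{S}varc--Milnor-type theorem from~\cite{Gphs+Perm+topGs}.
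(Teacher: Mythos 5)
Your proposal is correct, and at heart it is the same \v{S}varc--Milnor-type argument as the paper's, but the two proofs decompose the work differently. The paper first gets compact generation from a finite subcomplex $F$ whose translates cover $X$ (the compact set $\{x\in G: x.F\cap F\neq\emptyset\}$ generates), and then \emph{constructs} the rough Cayley graph explicitly: it fixes a vertex $x$ of the $1$-skeleton $\Gamma$, uses cocompactness to get a constant $k$ with every vertex within distance $k$ of the orbit $G.x$, and declares $g.x$ and $h.x$ adjacent when $d(g.x,h.x)\leq 2k+1$; the resulting graph is visibly connected, locally finite, vertex-transitive with compact open stabilizers, and quasi-isometric to $\Gamma$, so hyperbolicity transfers at once. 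You instead obtain a rough Cayley graph $Y$ abstractly from Theorem~\ref{thm:existence(lf-roughCayleyGph)} and then compare $Y$ with $X^{(1)}$ by citation. Both routes work; the paper's explicit construction has the advantage that the quasi-isometry to $X^{(1)}$ is immediate and self-contained, whereas your version outsources exactly that comparison to the reference. On that point your attribution is slightly off: the paper cites Theorem~4.5 of~\cite{Gphs+Perm+topGs} only for the quasi-isometry of rough Cayley graphs (which are vertex-transitive by definition), while the statement you actually need --- that a connected, locally finite graph carrying a cocompact, non-transitive $G$-action with compact open stabilizers is quasi-isometric to a rough Cayley graph --- is what the paper draws from Theorem~4.9 of the same reference (and then spells out). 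Your preliminary observation that $X^{(1)}\hookrightarrow X$ is a quasi-isometry, and your double-coset argument for compact generation, are both fine and are implicit in the paper's version.
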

\begin{proof}
Since 
the group~$G$ 
acts 
cocompactly 
on~$X$, 
there is 
a finite subcomplex, 
$F$ say,  
of~$X$ 
whose $G$-translates 
cover~$X$. 
The group~$G$ 
is then 
generated by 
its subset 
$\{x\in G\colon x.F\cap F\neq\emptyset\}$; 
this subset 
is compact, 
hence~$G$ 
is 
compactly generated. 

Denote 
the $1$-skeleton 
of~$X$ 
by~$\Gamma$. 
The graph~$\Gamma$ 
is 
a locally finite, connected $G$-set 
with compact, open point stabilizers. We use a standard argument, see \cite[Theorem~4.9]{Gphs+Perm+topGs} for instance, to complete the proof. 
Since 
the $G$-translates of the 1-skeleton of the finite complex, $F$, of the previous paragraph cover $\Gamma$, there are finitely many $G$-orbits in $\Gamma$ and there is a constant $k$ (for example, the diameter of $F$)  such that every vertex is within distance $k$ of each orbit. Fix a vertex $x$ of $\Gamma$ and define a graph structure on the orbit $G.x$ by drawing an edge between vertices $g.x$ and $h.x$ if they are within distance $2k+1$. Then the graph $G.x$ is connected, locally finite and quasi-isometric to $\Gamma$. Hence $G.x$ is Gromov-hyperbolic. Furthermore, $G$ acts transitively with compact vertex stabilizers on this graph. Therefore the graph $G.x$ is a rough Cayley graph for $G$ and $G$ is a hyperbolic group. 
\qed 
\end{proof}

While 
the action 
of a group 
on its Cayley graph 
is always faithful, 
the action of 
a compactly generated topological group 
on its rough Cayley graph 
need not be. 
The following result 
explains 
what happens 
when 
passing to 
the quotient 
of the group 
by the kernel of 
this action. 
The proof 
is straightforward,  
and 
is 
left to 
the reader. 

\begin{proposition}\label{prop:quot-by-kernel-of-action-on-rough-Cayley-graph}
Let~$G$ be 
a compactly generated topological group 
that contains 
a compact, open subgroup 
and 
let~$\Gamma$ be  
any of 
its 
locally finite, connected, vertex-transitive 
rough Cayley graphs 
with 
compact, open vertex-stabilizers. 
Let~$\widehat{G}$ be 
the quotient 
of~$G$ 
by 
the compact kernel of 
its action 
on~$\Gamma$. 
Then~$\Gamma$ 
with its induced action 
by~$\widehat{G}$ 
is again 
a rough Cayley graph 
for~$\widehat{G}$ 
with 
the same properties. 
In particular, 
if 
the group~$G$ 
is hyperbolic, 
then 
so is 
the group~$\widehat{G}$.  
\qed 
\end{proposition}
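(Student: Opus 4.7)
The plan is to verify each clause of Definition~\ref{def:rough-Gayley-Gph} for the induced action of $\widehat{G} := G/K$ on $\Gamma$, where $K$ denotes the kernel of the $G$-action on $\Gamma$, and then invoke Proposition~\ref{prop:hyperbolicity(roughCayley-graph)} for the hyperbolicity assertion. First I would note that $K = \bigcap_{v\in V\Gamma} G_v$, so $K$ is contained in (indeed, is a closed subgroup of) any one vertex stabilizer $G_v$; since each $G_v$ is compact, $K$ is compact and closed, and being normal in $G$ it makes $\widehat{G}$ into a topological group with continuous open quotient map $q\colon G\to\widehat{G}$.

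Next I would check the three properties. The action of $\widehat{G}$ on $\Gamma$ is well-defined precisely because $K$ acts trivially, and it is transitive on $V\Gamma$ because the $G$-action was. For any vertex $v$ the $\widehat{G}$-stabilizer equals the image $q(G_v)=G_v/K$; continuity and openness of $q$, together with $G_v$ being compact and open in $G$, give that $G_v/K$ is compact and open in $\widehat{G}$. Compact generation of $\widehat{G}$ is inherited from $G$ by applying $q$ to a compact generating set. Finally, $\Gamma$ itself is unchanged, so it remains locally finite, connected, and vertex-transitive under $\widehat{G}$.

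For the last sentence of the proposition, suppose $G$ is hyperbolic. By Proposition~\ref{prop:hyperbolicity(roughCayley-graph)} applied to $G$, the rough Cayley graph $\Gamma$ is Gromov-hyperbolic. Since $\Gamma$ has just been shown to be a rough Cayley graph for $\widehat{G}$ as well, a second application of Proposition~\ref{prop:hyperbolicity(roughCayley-graph)} — this time in the converse direction — yields that $\widehat{G}$ is hyperbolic.

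There is no genuine obstacle in this argument; the only point worth isolating is the observation that $K$ is automatically compact because it is contained in the compact vertex stabilizer $G_v$, which justifies the phrasing of the statement and makes the quotient $\widehat{G}$ a topological group of the same type as $G$ (namely, compactly generated, totally disconnected, locally compact, containing a compact open subgroup). Everything else is a direct unravelling of definitions, which is why the author leaves the proof to the reader.
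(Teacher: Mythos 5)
Your proof is correct and is exactly the direct verification the authors intend: the paper explicitly leaves this proposition's proof to the reader, and your argument (compactness of the kernel from its containment in a compact vertex stabilizer, openness/continuity of the quotient map giving compact open stabilizers $G_v/K$, and quasi-isometry of $\Gamma$ with the Cayley graphs of both $G$ and $\widehat{G}$ for the hyperbolicity claim) fills it in as expected. No gaps.
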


We also have 
the following 
transfer result 
for flat subgroups 
under continuous, open, surjective homomorphism 
with compact kernel. 

\begin{proposition}\label{prop:flat<Gs&proper-maps}
Let~$\pi\colon G\to \widehat{G}$ be 
a continuous, open, surjective homomorphism 
with compact kernel 
between 
totally disconnected, locally compact groups.
Further, 
let~$H$ be 
a flat subgroup 
of~$G$  
and~$\widehat{H}$ 
a flat subgroup 
of~$\widehat{G}$. 
Then~$\pi(H)$ 
is a flat subgroup 
of~$\widehat{G}$ 
of the same rank 
as~$H$ 
while 
$\pi^{-1}(\widehat{H})$ 
is  
a flat subgroup 
of~$G$ 
of the same rank 
as~$\widehat{H}$. 
The groups~$G$ 
and~$\widehat{G}$ 
have the same flat rank.  
\end{proposition}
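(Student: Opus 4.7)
The plan is to reduce flatness questions on $G$ to flatness questions on $\widehat{G}$ via the correspondence $O\leftrightarrow \pi(O)$ between compact open subgroups of $G$ containing $K:=\ker\pi$ and compact open subgroups of $\widehat{G}$. Since $\pi$ is continuous, open and surjective with compact normal kernel, this is a bijection with inverse $\widehat{O}\mapsto \pi^{-1}(\widehat{O})$.

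First I would verify that $\pi$ preserves the scale, $s_G(h)=s_{\widehat{G}}(\pi(h))$ for every $h\in G$. The key identity is
$$|\pi(h)\pi(O)\pi(h)^{-1}\colon \pi(h)\pi(O)\pi(h)^{-1}\cap \pi(O)| = |hOh^{-1}\colon hOh^{-1}\cap OK|,$$
valid for every compact open $O\leq G$; the right-hand side coincides with $|hOh^{-1}\colon hOh^{-1}\cap O|$ when $O\supseteq K$ and is bounded above by it in general. Taking infima yields the scale equality together with the fact that $O\supseteq K$ is minimizing for $h$ in $G$ if and only if $\pi(O)$ is minimizing for $\pi(h)$ in $\widehat{G}$. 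A corollary is that $OK$ is minimizing for $h$ whenever $O$ is, so common minimizing subgroups for flat subgroups may always be chosen to contain $K$.

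Flatness transfer then follows quickly: if $H$ is flat with common minimizing $O$, then $OK$ is also common minimizing, and $\pi(OK)$ witnesses flatness of $\pi(H)$; dually, $\pi^{-1}(\widehat{O})$ witnesses flatness of $\pi^{-1}(\widehat{H})$ when $\widehat{H}$ is flat with minimizing $\widehat{O}$. For the equality of ranks, fix a common minimizing $O\supseteq K$ for $H$: since $K\subseteq O$ forces $hOh^{-1}\supseteq K$ as well, the bijection yields $hOh^{-1}=O$ if and only if $\pi(h)\pi(O)\pi(h)^{-1}=\pi(O)$, so $H_1=H\cap \pi^{-1}(\pi(H)_1)$ and $\pi$ induces an isomorphism $H/H_1\cong \pi(H)/\pi(H)_1$; by Theorem~\ref{thm:flat} these free abelian groups have the same rank. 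An analogous argument, using $K\subseteq \pi^{-1}(\widehat{H})_1$, gives $\pi^{-1}(\widehat{H})/\pi^{-1}(\widehat{H})_1\cong \widehat{H}/\widehat{H}_1$. Taking suprema on each side yields the final equality of flat ranks.

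The hard part is the index identity displayed above: it is established by observing that $\pi$ restricts to a surjection $hOh^{-1}\to \pi(h)\pi(O)\pi(h)^{-1}$ with compact kernel $h(O\cap K)h^{-1}$, together with $\pi^{-1}(\pi(O))=OK$. Once this identity is in place, the normality of $K$ and the bijection of compact open subgroups reduce everything else to routine bookkeeping.
\qed
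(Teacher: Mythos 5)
Your proposal is correct and follows essentially the same route as the paper: reduce to compact open subgroups containing $\ker\pi$, establish the index identity relating $|hOh^{-1}\colon hOh^{-1}\cap O|$ to the corresponding index in $\widehat{G}$, deduce that $\pi$ preserves scales and minimizing subgroups, and then transfer flatness and the rank of $H/H_1$ through the induced isomorphism. The only cosmetic difference is that you state the index identity for arbitrary compact open $O$ (via $\pi^{-1}(\pi(O))=OK$) and specialize, whereas the paper first replaces a minimizing $O$ by $O\ker(\pi)$ and works only with subgroups containing the kernel.
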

\begin{proof}
Consider $h\in G$ and suppose that $O$ is minimizing for $h$, that is, is minimizing for the inner automorphism $\alpha_h : x\mapsto hxh^{-1}$. The index $|hOh^{-1}\colon hOh^{-1}\cap O|$ is unchanged if $O$ is replaced by $O\ker(\pi)$ and so it may be assumed 
that 
$\ker(\pi)\subseteq O\cap hOh^{-1}$.  
%
The subgroup~$\pi(O)$ 
of~$\widehat{G}$,  
which is 
also compact and open, 
then satisfies  
\begin{equation}\label{eq:tidyness_under-proper-maps}
|hOh^{-1}\colon hOh^{-1}\cap O|=
|\pi(h)\pi(O)\pi(h)^{-1}\colon \pi(h)\pi(O)\pi(h)^{-1}\cap \pi(O)|\,, 
\end{equation}
from which it follows that~$s_{\widehat G}(\pi(h))\leq s_G(h)$. On the other hand, if ${\widehat O} \leqslant {\widehat G}$ is minimizing for ${\hat h}\in {\widehat G}$, then $\pi^{-1}({\widehat O})$ is compact and open in $G$ and 
\begin{equation}\label{eq:tidyness_under-proper-maps2}
|h\pi^{-1}({\widehat O})h^{-1}\colon h\pi^{-1}({\widehat O})h^{-1}\cap \pi^{-1}({\widehat O})|=
|{\hat h}{\widehat O}{\hat h}^{-1}\colon {\hat h}{\widehat O}{\hat h}^{-1}\cap {\widehat O}|\,, 
\end{equation}
for any $h\in G$ with $\pi(h) = {\hat h}$ and it follows that~$s_G(h) \leq  s_{\widehat G}(\pi(h))$. Therefore the scales are equal and $O$ is minimizing for $h$ if and only if $\pi(O)$ is minimizing for $\pi(h)$. 

Letting $H$ be a flat subgroup of $G$ and ${\widehat H}$ be a flat subgroup of ${\widehat G}$, it follows that $\pi(H)$ and $\pi^{-1}({\widehat H})$ are flat subgroups of $G$ and ${\widehat G}$ respectively as claimed. Moreover, $h\in H_1$ if and only if $\pi(h_1)\in \pi(H)_1$ and so the map $\pi : H\to \pi(H)$ induces an isomorphism $H/H_1\to \pi(H)/\pi(H)_1$. Hence the ranks of $H$ and $\pi(H)$ are equal as claimed. That the ranks of ${\widehat H}$ and $\pi^{-1}({\widehat H})$ also agree may be seen similarly. Therefore $G$ and ${\widehat G}$ have the same \flatrk\!.
\qed
\end{proof}


\section{Method of proof}\label{sec:method(proof)}

The proof of 
the main result 
uses 
the classification 
of group actions by isometries
on Gromov-hyperbolic spaces in terms of fixed point properties 
versus 
existence of free subgroups.  
This is combined with 
topological properties 
of elliptic, parabolic and hyperbolic isometries 
and  
an analysis of 
the dynamics of 
actions of flat subgroups 
on the boundary 
of the hyperbolic space. 
For the geometric ideas we follow the approach 
in~\cite{woess}.

%
We begin by 
extending 
the necessary concepts 
of hyperbolic geometry 
to encompass 
topological groups. 

\begin{definition}[boundary of a hyperbolic group]  
Let~$G$ be 
a hyperbolic topological group. 
The \emph{hyperbolic boundary  
of~$G$} 
is 
the Gromov-boundary of 
the Cayley graph 
of~$G$ 
with respect to 
some compact generating set 
of~$G$. 
\end{definition}

The usual properties 
of the hyperbolic boundary 
carry over from 
the discrete case. 
That it is independent of the rough Cayley graph chosen
will be important 
in subsequent 
arguments. 

\begin{proposition}\label{prop:prop(hyp-boundary(G))}
The hyperbolic boundary of 
a hyperbolic topological group, $G$,
is 
independent of  
the choice of compact generating set 
used for 
its definition. 
It  
is 
a metric space 
which 
admits 
an action 
of $G$ 
by bi-Lipschitz maps. 
If 
$G$ 
admits 
a compact, open subgroup, 
then 
metrics 
can be chosen 
such that 
its hyperbolic boundary 
is equivariantly isometric to 
the Gromov-boundary 
of any 
of its rough Cayley graphs;  
in particular, 
in that case 
the hyperbolic boundary 
of $G$ 
is compact.  
\end{proposition}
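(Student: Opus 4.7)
The plan is to deduce everything from the standard theory of Gromov boundaries of hyperbolic geodesic metric spaces, combined with the quasi-isometry results already quoted in the paper (Lemma~4.6 and Theorem~4.5 of \cite{Gphs+Perm+topGs}), which allow geometric data to be transferred between the various Cayley graphs of $G$ and any locally finite rough Cayley graph produced by Theorem~\ref{thm:existence(lf-roughCayleyGph)}.

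First I would establish independence from the generating set. Any two Cayley graphs of $G$ with respect to compact generating sets are $G$-equivariantly quasi-isometric by Lemma~4.6(\romannumeral1) of \cite{Gphs+Perm+topGs}, and a quasi-isometry between $\delta$-hyperbolic geodesic spaces induces a canonical homeomorphism between their Gromov boundaries. This shows that the hyperbolic boundary of $G$ is well-defined as a topological $G$-space. To equip it with a metric, fix a compact generating set $S$ and the associated Cayley graph $\Gamma_S$, choose the identity as basepoint, and take a visual metric $\rho_a$ with visual parameter $a>1$ small enough that $\rho_a$ exists. The group $G$ acts on $\Gamma_S$ by isometries (left multiplication); on passing to the boundary, a change of basepoint alters a visual metric only by a bi-Lipschitz factor controlled by the Gromov product of the two basepoints, so the induced boundary action is by bi-Lipschitz maps. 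The same argument, applied with any other compact generating set, shows that the resulting bi-Lipschitz class of metric spaces is independent of the choice, justifying the phrase ``independent of the choice of compact generating set''.

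For the second part, assume that $G$ admits a compact open subgroup. Theorem~\ref{thm:existence(lf-roughCayleyGph)} then supplies a locally finite, connected rough Cayley graph $X$, which as a geodesic metric space is proper and Gromov-hyperbolic by Proposition~\ref{prop:hyperbolicity(roughCayley-graph)}. A standard argument (as in the proof of Proposition~\ref{prop:hyperbolicity(cocp-<Gs(Aut(lf-Gromov-hyp-complexes))}, or Theorem~4.5 of \cite{Gphs+Perm+topGs}) provides a $G$-equivariant quasi-isometry $f\colon \Gamma_S \to X$, which induces a $G$-equivariant homeomorphism $\partial f\colon \partial \Gamma_S \to \partial X$. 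Pick a visual metric on $\partial X$ and transport it through $\partial f$ to a metric on the hyperbolic boundary of $G$; this makes $\partial f$ a $G$-equivariant isometry by construction, and the pulled-back metric lies in the bi-Lipschitz class discussed above, so the qualitative properties persist. Compactness of the boundary of $G$ then follows because $\partial X$ is compact, a consequence of $X$ being proper.

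The main technical point, more a matter of bookkeeping than of substance, is coordinating visual parameters and basepoints so that the $G$-equivariant quasi-isometry $f$ produces a genuine isometry of boundaries rather than merely a bi-H\"older map. This is sidestepped by pulling back the visual metric on $\partial X$ along $\partial f$, so that the isometry is built in by definition. Apart from this, the proof amounts to a direct transfer of the classical Gromov boundary theory to the topological setting, made routine by the rough Cayley graph technology of \cite{Gphs+Perm+topGs} and \cite{analog(CayleyGphs(topGs))}.
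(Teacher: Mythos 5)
Your proposal is correct and follows essentially the same route as the paper, whose entire proof is a one-line appeal to ``standard results about Gromov-hyperbolic spaces'' together with Theorem~4.5 and Lemma~4.6 of \cite{Gphs+Perm+topGs} --- exactly the two results you invoke. You have simply filled in the standard boundary-theory details (visual metrics, basepoint change, pulling back the metric along the equivariant boundary map) that the authors leave implicit, and these are handled correctly.
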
 
\begin{proof}
Using 
standard results 
about Gromov-hyperbolic spaces, 
the above statements  
follow from 
Theorem~4.5 
and 
Lemma~4.6 
in~\cite{Gphs+Perm+topGs}. 
\qed 
\end{proof}

The classification of 
isometries of 
hyperbolic spaces 
also plays 
a central role 
in what follows. 
Since 
the action 
of a topological group 
on a 
rough Cayley graph 
need not be faithful, 
we  
extend 
the usual definitions 
as follows. 

\begin{definition}[elliptic, parabolic and hyperbolic elements]
\label{def:elliptic,parabolic,hyperbolic}
Let~$G$ be 
a group, 
$X$ be 
a Gromov-hyperbolic space  
and 
$\alpha\colon G\to \Aut X_.$
be 
an action 
of~$G$ 
on~$X$ 
by isometries. 
An element~$g$ 
in~$G$ 
is called 
\begin{enumerate}
\item 
\emph{$\alpha$-elliptic} 
if
there is 
a point 
of~$X$ 
whose $\alpha(g)$-orbit 
is bounded; 
in that case 
every other point 
of~$X$ 
has 
the same property; 
\item 
\emph{$\alpha$-parabolic} 
if 
it is not $\alpha$-elliptic 
and~$\alpha(g)$ fixes 
a unique boundary point;  
%
\item 
\emph{$\alpha$-hyperbolic} 
if 
it is not $\alpha$-elliptic 
and~$\alpha(g)$ fixes 
precisely two boundary points, 
which, 
for 
arbitrary~$x\in X$
are then 
of the form 
$\lim_{n\to \infty} \alpha^n(g).x$, 
called \emph{attracting for~$g$},  
and 
$\lim_{n\to \infty} \alpha^{-n}(g).x$, 
called \emph{repelling for~$g$}. 
\end{enumerate}
An element 
of a hyperbolic topological group 
is called 
\emph{elliptic, parabolic or hyperbolic} 
respectively, 
if 
it is 
$\alpha$-elliptic, $\alpha$-parabolic or $\alpha$-hyperbolic 
respectively 
for 
$\alpha$ 
equal to 
the natural action 
of the group 
on 
its Cayley graph 
with respect to 
a compact generating set.  
\end{definition}

Since 
Cayley graphs 
with respect to 
compact generating sets 
are 
quasi-isometric, 
the notions 
elliptic, parabolic or hyperbolic 
do 
not depend on 
the particular Cayley graph 
chosen and 
reference to
the homomorphism~$\alpha$ 
is usually 
omitted in the following. 

The elliptic elements 
of a locally compact, hyperbolic group 
can be characterized 
by an intrinsic 
topological property 
which 
generalizes 
the corresponding characterization 
in the discrete case.  

\begin{proposition}\label{prop:top-char(elliptics)}
An element of 
a locally compact, hyperbolic, topological group 
is elliptic 
if and only if 
it is topologically periodic.
\end{proposition}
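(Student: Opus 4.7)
The plan is to work with a locally finite, connected rough Cayley graph $X$ for $G$, whose existence is guaranteed by Theorem~\ref{thm:existence(lf-roughCayleyGph)}, rather than with the (possibly not locally finite) Cayley graph with respect to a compact generating set. Since the two graphs are quasi-isometric, boundedness of $\langle g\rangle$-orbits is preserved between them, and local finiteness of $X$ upgrades ``bounded orbit'' to ``finite orbit''. Thus I would first reduce the statement ``$g$ is elliptic'' to the equivalent statement that the orbit $\langle g\rangle.v$ is finite for some (equivalently, every) vertex $v$ of $X$.

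For the implication that topologically periodic implies elliptic, let $H := \overline{\langle g\rangle}$, which by assumption is compact. Since the vertex stabilizer $G_v$ is open in $G$, the orbit map $h\mapsto h.v$ sends $G$ continuously into the discrete $G$-set $VX$. Its restriction to the compact set $H$ therefore has finite image, so $\langle g\rangle.v$ is finite and $g$ is elliptic.

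For the converse, suppose $g$ is elliptic, so that some orbit $\langle g\rangle.v$ is finite. Then there is an integer $k\geq 1$ with $g^k\in G_v$, where $G_v$ is a compact open subgroup. Setting $H := \overline{\langle g\rangle}$, the subgroup $H\cap G_v$ is open in $H$ and is closed in the compact group $G_v$, hence compact. The discrete quotient $H/(H\cap G_v)$ is generated by the image of $g$, which has order dividing $k$, so the quotient is finite. Therefore $H$ is a finite union of cosets of the compact subgroup $H\cap G_v$, and is itself compact, so $g$ is topologically periodic.

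The main pitfall I foresee is not any single deep argument but the care needed to pass cleanly between the Cayley graph used in Definition~\ref{def:elliptic,parabolic,hyperbolic} and the locally finite rough Cayley graph on which the structural arguments are carried out; once that translation is in place (via Proposition~\ref{prop:hyperbolicity(roughCayley-graph)} and quasi-isometry), both implications reduce to the two standard facts above about continuous actions of totally disconnected, locally compact groups on graphs with compact open vertex stabilizers.
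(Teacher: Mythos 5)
Your argument is sound, but it proves a weaker statement than the one asserted, and it takes a genuinely different route from the paper. The paper's proof never leaves the Cayley graph of $G$ with respect to a compact generating set: it observes that $g$ is elliptic iff the orbit $\langle g\rangle.e=\langle g\rangle$ is bounded in the word metric, and then invokes Abels' Heine--Borel property (result~2.3 of \cite{Specker-cp(lctG)}), which says that a subset of a compactly generated, locally compact group is bounded in the word metric iff it is relatively compact; the equivalence with topological periodicity is then immediate. This works for \emph{every} locally compact, hyperbolic group, which is the generality in which the proposition is stated. Your proof, by contrast, begins by invoking Theorem~\ref{thm:existence(lf-roughCayleyGph)} to obtain a locally finite rough Cayley graph with compact \emph{open} vertex stabilizers, and that theorem requires $G$ to be totally disconnected. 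For a connected hyperbolic group such as $\mathbb{R}$ or $\mathrm{PSL}_2(\mathbb{R})$ there is no compact open subgroup and hence no such graph, so your first step fails there even though the statement remains true (and is what the paper's Heine--Borel argument delivers).

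Within the totally disconnected case -- which is all the rest of the paper actually uses -- your two implications are correct: continuity of the orbit map into the discrete vertex set gives finiteness of $H.v$ for $H=\overline{\langle g\rangle}$ compact, and conversely $g^k\in G_v$ forces $\langle g\rangle\subseteq\bigcup_{i=0}^{k-1}g^i(H\cap G_v)$, a compact set, so $H$ is compact. (Your phrase ``the discrete quotient $H/(H\cap G_v)$'' is harmless here only because $H$, being the closure of a cyclic group, is abelian, so $H\cap G_v$ is normal in $H$; it would be cleaner to argue directly with the finite union of cosets.) To repair the proposal you should either add the hypothesis of total disconnectedness, or replace the rough Cayley graph by the word metric and quote Abels' result as the paper does.
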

\begin{proof}
By definition, 
an element, 
$g$ say, 
of the given group~$G$ 
is elliptic 
if and only if 
its orbits 
in the Cayley graph, 
$\Gamma$ say,  
of~$G$ 
with respect to 
a compact set 
of generators 
is bounded 
in the graph metric. 
The property 
of being bounded 
is independent of 
the orbit chosen. 
Hence 
$g$ is elliptic 
if and only if 
its orbit $\langle g\rangle.e= \langle g\rangle$ 
is bounded. 
Abels' result~2.3 
in~\cite{Specker-cp(lctG)} 
(Heine-Borel-Eigenschaft) 
implies that 
a subset 
of~$\Gamma$ 
is bounded 
in the graph-metric 
if and only if 
it is 
a relatively compact subset 
of~$G$. 
The latter condition 
is satisfied by 
the set $\langle g\rangle$ 
if and only 
$g$ is topologically periodic. 
%
\qed
\end{proof}

\section{Properties of elliptic, parabolic and hyperbolic elements}\label{sec:types of elements}

\subsection{The scale of an elliptic element is~$1$}\label{subsec:scale(elliptic)=1}

An elliptic element in a totally disconnected, locally compact group is topologically periodic, by Proposition~\ref{prop:top-char(elliptics)}. Hence the set of conjugates of an open subgroup by powers of such an element is finite and the intersection of these conjugates is an open subgroup normalized by the element.

\begin{proposition}\label{prop:elliptic->scale1}
The scale of 
every elliptic element 
in 
a totally disconnected, locally compact, hyperbolic group 
is~$1$. 
\qed
\end{proposition}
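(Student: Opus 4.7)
The plan is to reduce the claim, via Proposition~\ref{prop:top-char(elliptics)}, to the following purely topological statement: any topologically periodic element $g$ of a totally disconnected, locally compact group $G$ has scale~$1$. Once this is established, the conclusion is immediate, since every elliptic element of a hyperbolic group is topologically periodic.

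To prove the reduced statement, let $g$ be topologically periodic and set $H := \overline{\langle g\rangle}$, which is then a compact subgroup of $G$. The strategy is to exhibit a compact open subgroup $W$ of $G$ that is normalized by $H$, and in particular by $g$; the inequality
\[
s_G(g)\leq |gWg^{-1}\colon gWg^{-1}\cap W| = |W\colon W| = 1,
\]
combined with $s_G(g)\geq 1$, then forces $s_G(g)=1$.

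For the construction of $W$, I would start from an arbitrary compact open subgroup $V\leqslant G$, which exists by van Dantzig's theorem. The key step is to show that the $H$-orbit of $V$ under conjugation is finite, so that
\[
W := \bigcap_{h\in H} hVh^{-1}
\]
is a finite intersection of compact open subgroups, hence itself compact and open, and visibly $H$-invariant. Finiteness of this orbit follows from two elementary observations: first, $V\cap H$ is an open subgroup of the compact group $H$ and therefore has finite index; second, $V\cap H\subseteq N_G(V)\cap H$, so $N_G(V)\cap H$ itself has finite index in $H$, and its cosets parametrize the distinct $H$-conjugates of $V$.

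The only non-bookkeeping input is the compactness of $H$, used twice: to make $V\cap H$ of finite index in $H$, and to ensure the finite intersection inherits openness. The hyperbolicity hypothesis enters only through the topological characterization of elliptic elements in Proposition~\ref{prop:top-char(elliptics)}; no geometric input from the Cayley graph is required beyond that. For this reason I do not anticipate a genuine obstacle: the argument works for any topologically periodic element of any totally disconnected, locally compact group, and the delicate hyperbolic content has already been packaged into the preceding proposition.
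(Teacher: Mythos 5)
Your proposal is correct and follows essentially the same route as the paper: the paper's (very terse) argument is precisely that an elliptic element is topologically periodic by Proposition~\ref{prop:top-char(elliptics)}, so the conjugates of a compact open subgroup under its powers form a finite set whose intersection is a compact open subgroup normalized by the element, forcing the scale to be~$1$. Your write-up merely supplies the details (passing to $\overline{\langle g\rangle}$ and the normalizer/finite-index argument) that the paper leaves implicit.
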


\subsection{Totally disconnected, 
hyperbolic groups contain no parabolics}\label{subsec:hypGs-not>parabolics}

Discrete hyperbolic groups 
do not contain 
parabolic elements. 
This is proved in 
each one 
of the following sources: 
\cite[Corollary~8.1.D]{hypGs} (together with the obvious observation that torsion elements are elliptic), \cite[Chapitre~9, Th\'eor\`eme~3.4]{LNM1441} 
and 
\cite[Chapitre~8, Th\'eor\`eme~29]{surGhyp-d'apresGromov}. 

Theorem~\ref{thm:Aut(hyperbolic-complex)-has-no-parabolics} below 
extends 
this result 
to totally disconnected, locally compact, hyperbolic groups, 
the proof of which is modelled on 
the argument from~\cite{LNM1441}. 
The following property 
of non-hyperbolic elements 
is of central importance 
in the proof. 

\begin{lemma}\label{lem:parabolics-bounded-at-infty}
Let $X$ be 
a geodesic $\delta$-hyperbolic space. 
Then 
there is 
a constant~$k$, 
depending only on~$\delta$,  
such that: 
given a non-hyperbolic isometry, $g$, 
of~$X$ 
that fixes 
a boundary point~$\omega$, 
and a geodesic ray, $x$, 
ending in~$\omega$,  
all points 
on $x$ 
which are 
sufficiently far out 
are moved 
by a distance of 
at most~$k$ 
by~$g$. 
\end{lemma}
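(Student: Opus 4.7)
The plan is to compare the two geodesic rays $x$ and $g.x$, which share $\omega$ as their common endpoint at infinity, and to use the non-hyperbolicity of $g$ to force their parameterisations to agree up to a bounded shift. The argument proceeds in three steps.

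First, I would invoke the standard \emph{asymptotic rays property} for $\delta$-hyperbolic geodesic spaces: two geodesic rays with a common endpoint on the boundary are uniformly close after a shift of parameter. Applied to the rays $x$ and $g.x$, this gives a constant $K_0 = K_0(\delta)$ depending only on $\delta$, a shift $t_0 \in \mathbb{R}$ depending on $g$ and $x$, and a threshold $T_0 \geq 0$ such that
\[
d\bigl(x(t + t_0),\, g.x(t)\bigr) \leq K_0 \quad \text{for all } t \geq T_0.
\]

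Second, I would iterate this estimate: applying the isometry $g^i$ to the displayed inequality and chaining $n$ copies through the triangle inequality (with the substitutions $t \mapsto t + (n-1-i)t_0$) yields
\[
d\bigl(x(t + n t_0),\, g^n.x(t)\bigr) \leq n K_0
\]
for every $t$ large enough that $t + i t_0 \geq T_0$ for all $i = 0, \dots, n-1$. Taking $p := x(t)$ and combining with $d(x(t + n t_0), x(t)) = n|t_0|$, the reverse triangle inequality gives
\[
d(g^n.p,\, p) \;\geq\; n|t_0| - n K_0.
\]
Hence the translation length $\tau(g) := \lim_{n \to \infty} d(g^n.p, p)/n$ satisfies $\tau(g) \geq |t_0| - K_0$.

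Third, non-hyperbolicity of $g$ forces $\tau(g) = 0$: an elliptic element has bounded orbit by Proposition~\ref{prop:top-char(elliptics)}, and a parabolic element of a Gromov-hyperbolic space has sublinearly growing displacement. Thus $|t_0| \leq K_0$, and the triangle inequality gives, for every $t \geq T_0$,
\[
d\bigl(g.x(t),\, x(t)\bigr) \;\leq\; d\bigl(g.x(t),\, x(t + t_0)\bigr) + d\bigl(x(t + t_0),\, x(t)\bigr) \;\leq\; K_0 + |t_0| \;\leq\; 2K_0,
\]
proving the lemma with $k := 2 K_0$, depending only on $\delta$.

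The main obstacle I anticipate is the bookkeeping in the iteration step: the asymptotic control must be preserved along a $g$-orbit by choosing the basepoint $t$ large compared with $n$, and one must then pass to the limit defining $\tau(g)$. The two underlying ingredients---the asymptotic rays property in $\delta$-hyperbolic spaces and the characterisation of hyperbolic isometries by positive translation length---are standard for Gromov-hyperbolic spaces and can be quoted from the geometric references already cited in the paper (notably~\cite{woess} and~\cite{LNM1441}).
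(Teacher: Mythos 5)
Your route is genuinely different from the paper's. The paper's proof is two citations from~\cite{LNM1441}: Lemme~9.3.1 (for a non-hyperbolic isometry the midpoint $m$ of a segment $[p,g.p]$ satisfies $d(g.m,m)\le 6\delta$) followed by Lemme~9.3.6 (displacement along a ray into a fixed boundary point is eventually at most $72\delta+d(g.m,m)$), giving the explicit constant $k=78\delta$. You instead compare $x$ with $g.x$ via the asymptotic-rays property and rule out a large parameter shift $t_0$ using the stable translation length. Both ingredients you quote (asymptotic rays are uniformly close after a shift; $\tau(g)>0$ forces $g$ to be hyperbolic in the sense of Definition~\ref{def:elliptic,parabolic,hyperbolic}) are standard and available in \cite{LNM1441} and \cite{surGhyp-d'apresGromov}, so the overall strategy is sound; the second ingredient plays exactly the role that Lemme~9.3.1 plays for the authors. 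One incidental remark: your appeal to Proposition~\ref{prop:top-char(elliptics)} is out of place here --- that result concerns topological groups, whereas ``elliptic'' for an isometry of $X$ means bounded orbits by definition.

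There is, however, a genuine gap at the step you yourself flag, and it is not mere bookkeeping. When $t_0<0$, the condition $t+it_0\ge T_0$ for $i=0,\dots,n-1$ forces the basepoint $p=x(t)$ to depend on $n$ (one needs $t\ge T_0+(n-1)|t_0|$). You then conclude $\tau(g)\ge |t_0|-K_0$ from $d(g^n.p_n,p_n)\ge n(|t_0|-K_0)$, but $\tau(g)=\lim_n d(g^n.q,q)/n$ is computed with a \emph{fixed} basepoint, and the inference ``for each $n$ there exists $p_n$ with $d(g^n.p_n,p_n)\ge cn$, hence $\tau(g)\ge c$'' is false in general: a rotation of the real hyperbolic plane about a point has $\tau=0$, yet for each $n$ with $g^n\ne\mathrm{id}$ one can find $p_n$ displaced by $g^n$ as far as one likes by moving $p_n$ away from the centre. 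The correction $|d(g^n.q,q)-d(g^n.p_n,p_n)|\le 2d(q,p_n)$ is useless here because $d(q,p_n)$ itself grows linearly in $n$. The fix is easy but must be said: normalise the sign of the shift by replacing $g$ with $g^{-1}$ if necessary. Indeed $g^{-1}$ is again non-hyperbolic, fixes $\omega$, has shift $-t_0$ and satisfies $d(g^{-1}.y,y)=d(g.y,y)$ for all $y$, so both the hypotheses and the desired conclusion are unchanged; once $t_0\ge 0$ the single basepoint $p=x(T_0)$ works for every $n$ and your limit argument is valid. With that repair, and with the standard facts quoted precisely (in particular that $\tau(g)>0$ implies the orbit map is a quasi-geodesic whose two endpoints are the only fixed boundary points, so that ``non-hyperbolic'' really does force $\tau(g)=0$), your proof goes through with $k=2K_0(\delta)$.
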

\begin{proof} 
Choose any point~$p$  
of~$X$ 
and 
denote 
the midpoint 
of a chosen geodesic segment 
connecting 
$p$ to $g.p$ 
by~$m$.   
By Lemme~9.3.1 
in~\cite{LNM1441} 
\[
d(g.m,m)\le 6\delta\,.
\]
Applying 
Lemme~9.3.6 
in~\cite{LNM1441}
to the entities $x$ and $m$ 
chosen,  
we obtain that 
there is 
a number~$t_0\ge 0$ 
such that 
for each~$t\ge t_0$ 
\[
d(g.x(t),x(t))\le 72\delta +d(g.m,m)\le 72\delta+6\delta=78\delta\,,
\]
and so we may take $k$ to be $78\delta$. 
\qed
\end{proof}

The following theorem 
is 
the main result 
of this subsection.

\begin{theorem}\label{thm:Aut(hyperbolic-complex)-has-no-parabolics}
Suppose that 
a group~$G$  
acts cocompactly 
and 
by automorphisms 
on a connected, locally finite, metric, Gromov-hyperbolic complex. 
Then~$G$  
does not contain 
parabolic elements. 
In particular, 
a hyperbolic topological group 
with a compact, open subgroup 
does not contain 
parabolic elements. 
\end{theorem}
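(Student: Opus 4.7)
I would argue by contradiction, adapting the discrete-case argument from~\cite{LNM1441}. Suppose $g\in G$ is parabolic and fixes the unique boundary point $\omega$. The first step is to observe that every non-zero power $g^n$ is again parabolic fixing $\omega$: if some $g^n$ ($n\ne 0$) were elliptic with bounded orbit $\{g^{nm}\cdot v\}_{m\in\mathbb Z}$, then
$\{g^\ell\cdot v\}_{\ell\in\mathbb Z}=\bigcup_{r=0}^{n-1} g^r\cdot\{g^{nm}\cdot v\}_{m\in\mathbb Z}$
would be a finite union of isometric translates of a bounded set, hence bounded, forcing $g$ itself to be elliptic.

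Next I would fix a basepoint $p\in X$ and a geodesic ray $\alpha\colon [0,\infty)\to X$ from $p$ to $\omega$. The essential input is that the constant $k=78\delta$ produced by Lemma~\ref{lem:parabolics-bounded-at-infty} depends only on the hyperbolicity constant of $X$, not on the isometry; applying that lemma to each $g^n$ yields a threshold $T_n\ge 0$ with $d(g^n\cdot\alpha(t),\alpha(t))\le k$ for every $t\ge T_n$. Since $G$ acts cocompactly on the locally finite complex $X$ by automorphisms, there are only finitely many $G$-orbits of vertices and so $X$ is uniformly locally finite on vertices: there exists a constant $C$ bounding the number of vertices contained in any closed metric ball of radius $k+1$. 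Fix an integer $N>C$, pick $T\ge\max\{T_1,\ldots,T_N\}$, and choose a vertex $v$ at distance at most $1$ from $\alpha(T)$. The $N$ vertices $g\cdot v,\ldots,g^N\cdot v$ then all lie in the ball of radius $k+1$ about $v$, so the pigeonhole principle produces indices $1\le i<j\le N$ with $g^i\cdot v=g^j\cdot v$. Consequently $g^{j-i}$ fixes the vertex $v$ and is elliptic, contradicting the conclusion of the first step.

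The ``in particular'' statement then follows at once: a compactly generated hyperbolic topological group with a compact open subgroup falls under Theorem~\ref{thm:existence(lf-roughCayleyGph)}, which delivers a locally finite connected graph on which $G$ acts transitively (hence cocompactly) with compact open vertex stabilizers; this graph is Gromov-hyperbolic by Proposition~\ref{prop:hyperbolicity(roughCayley-graph)}, so the first part applies. I expect the main obstacle to be securing the uniformity in $n$ of the constant $k$ in Lemma~\ref{lem:parabolics-bounded-at-infty}; this is however gifted by its statement, with only the thresholds $T_n$ depending on $n$, and the latter is harmless because only finitely many powers of $g$ participate in the pigeonhole argument simultaneously. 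A minor nuisance is that $\alpha(T)$ need not itself be a vertex of the complex, dealt with by passing to a nearby vertex and enlarging $k$ by a small universal constant.
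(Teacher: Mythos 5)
Your argument is correct and follows essentially the same route as the paper's proof: show that powers of a parabolic remain parabolic, apply Lemma~\ref{lem:parabolics-bounded-at-infty} uniformly to $g,\dots,g^N$ along a geodesic ray to $\omega$, and use cocompactness plus local finiteness to pigeonhole a repeated vertex $g^i\cdot v=g^j\cdot v$, forcing $g^{j-i}$ to be elliptic. The only discrepancy is cosmetic: the paper replaces your ``distance at most $1$'' and radius $k+1$ by a bound $D$ on cell diameters (available by cocompactness) and radius $k+2D$, which is exactly the adjustment you flag at the end.
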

\begin{proof}
We argue 
by contradiction,  
and assume that 
some element, 
$g$ say, 
of~$G$ 
acts by 
a parabolic isometry. 
All positive powers of $g$ 
are again 
parabolic 
and have the same 
unique fixed point 
on the boundary, 
$\omega$ say.  

The diameter 
of cells 
in our space 
is bounded 
from above, 
say by
the positive number~$D$, 
because 
the group~$G$ 
is assumed 
to act 
cocompactly. 
Denote by~$n$ 
a natural number 
larger than 
the maximal number 
of vertices 
of the space 
that are 
contained in 
any closed ball 
whose radius is~$k+2D$, 
where~$k$ is  
the number
introduced in 
the statement 
of Lemma~\ref{lem:parabolics-bounded-at-infty}. 
Such a number~$n$ 
exists, 
because~$G$ acts cocompactly. 

Choose 
a geodesic ray, 
$x$ say, 
that ends 
in~$\omega$. 
Lemma~\ref{lem:parabolics-bounded-at-infty} 
applied to 
the elements $g, g^2,\ldots, g^n$ 
and this~$x$ 
implies that 
there is a 
number~$T$ 
such that 
for 
$t\ge T$
and~$i=1,\ldots, n$ 
we have 
$ 
d(g^i.x(t),x(t))\le k 
$.  
By the definition 
of~$D$,  
there is 
a vertex, 
$v$ say,  
at distance 
at most~$D$ 
from $x(t)$. 
Then 
the above bound 
on the displacement of 
the point~$x(t)$,  
implies that~$v$ 
is moved 
a distance 
at most $k+2D$ 
by each of 
the elements $g$, $g^2$, \ldots $g^n$. 

By our choice 
of~$n$,  
there are exponents 
$i<j$ 
such that 
$g^i.v=g^j.v$. 
But then 
$g^{j-i}$ fixes~$v$,  
and hence 
is elliptic. 
Hence 
the element~$g$ 
is also elliptic, 
in contradiction 
to the assumption 
that 
$g$ is parabolic. 
This contradiction 
shows that 
there is no parabolic element, 
finishing the proof. 
\qed 
\end{proof}

\section{Proof of the Main Result}\label{sec:Pf(MainResult)}

We will use 
the classification 
of actions 
on hyperbolic spaces 
established in~\cite{woess},  
as already mentioned.   
%
The bound on
the rank of 
a flat subgroup is proved
on a case-by-case basis according to
this classification. 

The following two lemmas  
prepare 
Proposition~\ref{prop:F2<flatG=>rk=0}, 
which 
provides 
this bound 
if the flat group 
contains 
a non-abelian free group 
consisting of 
hyperbolic elements.

\begin{lemma}\label{lem:Gv notfix rep.pt(elt.scale>1)}
Let $G$ be 
a totally disconnected, locally compact group 
acting cocompactly 
and 
with compact, open point stabilizers on 
a locally finite, connected $\delta$-hyperbolic complex. 
If~$h$ is 
an element 
of~$G$ 
of non-trivial scale 
(which is thus necessarily hyperbolic),  
then 
the orbit of~$\omega_h$, 
the repelling boundary point  
of~$h$, 
under 
every open subgroup 
is infinite. In particular, 
no open subgroup of $G$
 fixes~$\omega_h$. 
\end{lemma}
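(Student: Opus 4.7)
The plan is to first reduce the statement to showing that no compact open subgroup of~$G$ can fix~$\omega_h$ pointwise. Every open subgroup of a totally disconnected, locally compact group contains a compact open one, and if some open subgroup~$W$ had a finite orbit $W.\omega_h$, then the pointwise stabiliser of~$\omega_h$ in~$W$ would be closed of finite index, hence open, and would in turn contain a compact open subgroup of~$G$ fixing~$\omega_h$. Thus the ``in particular'' clause already implies the displayed statement, and it suffices to derive a contradiction under the hypothesis that some compact open $U\leqslant G$ fixes~$\omega_h$.

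Assume then that such a $U$ exists. By Proposition~\ref{prop:elliptic->scale1} and Theorem~\ref{thm:Aut(hyperbolic-complex)-has-no-parabolics}, the element~$h$ is hyperbolic in the sense of Definition~\ref{def:elliptic,parabolic,hyperbolic}. I would fix a vertex~$v_0$ on (or close to) an axis of~$h$, and, replacing~$U$ by $U\cap G_{v_0}$, assume that~$U$ fixes both~$v_0$ and~$\omega_h$. Let $\gamma\colon[0,\infty)\to X$ be a geodesic ray from~$v_0$ to~$\omega_h$. For every $u\in U$ the translate $u.\gamma$ is a geodesic ray from~$v_0$ to~$\omega_h$, so the thin triangle condition in the $\delta$-hyperbolic space~$X$ gives $d(u.\gamma(t),\gamma(t))\leq 2\delta$ for all $t\geq 0$. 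Combined with the local finiteness of~$X$, this implies that the $U$-orbit of every vertex on~$\gamma$ has cardinality at most some uniform constant~$N$, and hence the stabiliser in~$U$ of any such vertex is a compact open subgroup of index at most~$N$ in~$U$.

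The remainder of the plan, and its main obstacle, is to convert this geometric boundedness into a compact open subgroup of~$G$ normalised by~$h$, which would give $s_G(h)=1$ in contradiction with the hypothesis of non-trivial scale. Since~$h$ is hyperbolic with repelling fixed point~$\omega_h$, the sequence $(h^{-n}v_0)_{n\geq 0}$ is a quasi-geodesic converging to~$\omega_h$ and lies within bounded distance of~$\gamma$; hence by the previous paragraph each $U_n:=U\cap G_{h^{-n}v_0}$ has index at most~$N$ in~$U$ (after absorbing the quasi-geodesic constants into~$N$). One then seeks an $h$-equivariant subgroup: the closed subgroup $O:=\bigcap_{n\geq 0}U_n\leqslant U$ fixes the entire negative $h$-orbit of~$v_0$, and the hard part will be to show that~$O$ (or a suitable finite-index variant) is both open and normalised by~$h$. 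The twofold difficulty is that openness of~$O$ requires the descending sequence $U\cap U_1\cap\cdots\cap U_n$ of open subgroups of uniformly bounded index to actually stabilise (automatic for topologically finitely generated profinite~$U$ but requiring additional argument in general), while $h$-invariance requires controlling the positive $h$-direction, where the fixation of~$\omega_h$ by~$U$ gives no direct information and one must invoke the source--sink dynamics of~$h$ together with further structure of~$G$.
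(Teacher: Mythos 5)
Your opening reduction (it suffices to rule out a compact open subgroup fixing $\omega_h$) and your geometric core (a compact open $U$ fixing a vertex $v_0$ and the boundary point $\omega_h$ displaces every point of a geodesic ray ending in $\omega_h$ by a bounded amount, so the orbits $U.(h^{-n}.v_0)$ have uniformly bounded diameter and hence uniformly bounded cardinality --- note that the cardinality bound needs cocompactness, not local finiteness alone) match the paper's argument. The gap is in the final step, and you have diagnosed it correctly yourself: the passage from ``bounded orbits'' to $s_G(h)=1$ is not carried out. The route you sketch --- producing a compact open subgroup normalised by $h$ as $O=\bigcap_{n\ge 0}U_n$ --- fails as stated: a descending intersection of open subgroups of uniformly bounded index in a profinite group need not be open (take $U=\prod_{i\ge 1}\mathbb{Z}/2\mathbb{Z}$ and $U_n$ the kernel of the $n$-th coordinate map), and the conjugation identity $hG_{h^{-n}.v_0}h^{-1}=G_{h^{-n+1}.v_0}$ only yields a one-sided containment for the intersection. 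So the proposal does not yet prove the lemma.

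The missing ingredient, which is exactly what the paper uses to close the argument, is the orbit-growth formula for the scale: if $V$ is a compact open subgroup stabilising a vertex $v$, then by Theorem~7.7 of \cite{struc(tdlcG-graphs+permutations)} one has $s_G(h)=\lim_{n\to\infty}|V.(h^{-n}.v)|^{1/n}$. With this formula, the uniform bound $|V.(h^{-n}.v)|\le N$ that you have already established gives $s_G(h)=1$ at once, contradicting the hypothesis of non-trivial scale; no $h$-invariant or tidy subgroup needs to be constructed. Without that formula (or an equivalent input such as running the tidying procedure for $h$ starting from $V$), the openness and $h$-invariance issues you flag in your last paragraph are genuine obstructions, not technicalities, so the proof as written is incomplete.
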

\begin{proof}
It follows from Proposition~\ref{prop:elliptic->scale1} 
and 
Theorem~\ref{thm:Aut(hyperbolic-complex)-has-no-parabolics} 
that 
an element~$h$ 
in~$G$ 
of non-trivial scale 
must 
indeed 
be hyperbolic 
as stated. 
Proposition~\ref{prop:hyperbolicity(cocp-<Gs(Aut(lf-Gromov-hyp-complexes))} 
implies that~$h$ 
also 
acts as 
a hyperbolic automorphism 
of the given complex. 

We now begin 
the proof proper; 
we will prove 
the contraposition. 
Assume then 
that 
$V$ is 
an open subgroup 
of~$G$ 
such that 
the orbit 
of 
$\omega_h$ 
under~$V$ 
is finite. 
Then 
a closed subgroup 
of finite index 
in~$V$ 
fixes~$\omega_h$ 
and 
we can assume that 
$V$ fixes~$\omega_h$. 
Intersecting
the group~$V$ 
with 
the stabilizer 
of a vertex, 
$v$ say, 
we may assume 
that 
$V$ fixes 
a given vertex~$v$ also. 

Applying Theorem~7.7 in~\cite{struc(tdlcG-graphs+permutations)} 
with~$V$ 
equal to 
the group of the same name, 
and~$x$ equal to~$h$, 
we see that 
the scale 
of~$h$ 
is given by 
the limit 
\[
\lim_{n\to\infty} |V.(h^{-n}.v)|^{1/n}\,. 
\]
We will use 
our assumptions 
to show that 
there is 
a bound 
on the diameter 
of the orbits~$V.(h^{-n}.v)$  
that is 
uniform in~$n\in\mathbb{N}$. 
Because 
$G$ acts 
cocompactly, 
this implies that 
there is 
a uniform bound on 
the number 
of the vertices  
in these orbits. 
The displayed formula above 
will then show 
that the scale 
of~$h$ 
is~$1$,  
and 
establish 
our claim.  

The map~$f$ 
that sends
an integer~$n$ 
to the vertex~$h^{-n}.v$ 
is 
a quasi-geodesic ray 
that converges 
to~$\omega_h$. 
By 
part~$(i)$ of Th\`eor\'eme~5.25 in~\cite{surGhyp-d'apresGromov}, 
there is a geodesic ray, $r$ say,  
that starts at~$v$   
and is at Hausdorff-distance 
at most~$H$ 
from~$f$;  
the ray~$r$ 
therefore 
ends in~$\omega_h$ 
also. 
Then part~$(i)$ of Corollaire~7.3 in~\cite{surGhyp-d'apresGromov}  
implies that 
$d(r(t),g.r(t))\le 8\delta$ for all $g\in V$ and all $t\ge 0$.

Let~$n$ be 
an integer. 
On the geodesic ray~$r$ 
choose 
a point, 
$r(t_n)$ say, 
such that 
the distance 
between~$h^{-n}.v$ and~$r(t_n)$ 
is 
at most~$H$. 
For every~$g$ 
in~$V$ 
the distance 
between~$g.(h^{-n}.v)$ and~$g.r(t_n)$ 
is then 
at most~$H$ 
also.   
%
%
We conclude that 
for all~$g$ 
in~$V$ 
the distance 
between 
$h^{-n}.v$ and~$g.(h^{-n}.v)$ 
is at most~$2H+8\delta$; 
hence 
the diameter of 
the orbits~$V.(h^{-n}.v)$  
is 
indeed 
uniformly bounded, 
and 
we are done. 
\qed 
\end{proof}

The claim of 
the following Lemma 
is false 
for flat groups 
of rank~$0$. 

\begin{lemma}\label{lem:flatG(rk>0)=>no-hyperb<unisc<G}
Let $G$ be 
a totally disconnected, locally compact, hyperbolic group 
and 
$H$ a flat subgroup 
of~$G$ 
of rank 
at least~$1$. 
Then 
the group~$H_1$ 
does not contain 
hyperbolic elements. 
\end{lemma}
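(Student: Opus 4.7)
The plan is proof by contradiction, working on a locally finite, connected, Gromov-hyperbolic rough Cayley graph $\Gamma$ of $G$ (which exists by Theorem~\ref{thm:existence(lf-roughCayleyGph)} together with Proposition~\ref{prop:hyperbolicity(roughCayley-graph)}, and on which $G$ acts transitively with compact open vertex stabilizers). I would first produce an element $g\in H$ of nontrivial scale, then derive a contradiction to Lemma~\ref{lem:Gv notfix rep.pt(elt.scale>1)} by exhibiting an open subgroup of $G$ that fixes the repelling boundary point $\omega_g$.

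Let $O$ be a compact open subgroup that is minimizing for all of $H$. Because the rank of $H$ is positive, $H/H_1$ is nontrivial, so there exists $g\in H\setminus H_1$, meaning $gOg^{-1}\neq O$. A direct inspection of the defining index $|gOg^{-1}\colon gOg^{-1}\cap O|$ shows that $s(g)=s(g^{-1})=1$ would force $gOg^{-1}\subseteq O\subseteq gOg^{-1}$, contradicting $g\notin H_1$; hence at least one of $s(g),s(g^{-1})$ exceeds $1$, and after possibly replacing $g$ by its inverse I may assume $s(g)>1$. Lemma~\ref{lem:Gv notfix rep.pt(elt.scale>1)} now asserts that $g$ acts hyperbolically on $\Gamma$ and that no open subgroup of $G$ fixes its repelling boundary point $\omega_g$.

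Suppose for contradiction that $h\in H_1$ is hyperbolic with attracting and repelling fixed points $\omega_h^+,\omega_h^-$. The central step is to show that $O$ fixes both $\omega_h^\pm$. Fix a vertex $v$ of $\Gamma$ and let $o\in O$. Since $h\in H_1$ normalizes $O$, the element $h^{-n}oh^n$ lies in $O$ for every $n$, so
\[
d\bigl(h^n.v,\,o.h^n.v\bigr)\;=\;d\bigl(h^n.v,\,h^n(h^{-n}oh^n).v\bigr)\;=\;d\bigl(v,\,(h^{-n}oh^n).v\bigr)
\]
is bounded by the diameter of the finite orbit $O.v$ uniformly in $n$. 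Since $h^n.v$ converges to $\omega_h^+$ in $\Gamma\cup\partial\Gamma$, the standard Gromov-hyperbolic criterion that boundedly close sequences share a boundary limit forces $o.\omega_h^+=\omega_h^+$, and the analogous argument with $n\to-\infty$ yields $o.\omega_h^-=\omega_h^-$.

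To conclude, I apply the central step to every conjugate of $h$ by a power of $g$: since $H_1$ is normal in $H$ by Theorem~\ref{thm:flat}, each $g^nhg^{-n}$ again lies in $H_1$ and is hyperbolic with fixed points $g^n.\omega_h^\pm$, so $O$ fixes $g^n.\omega_h^\pm$ for all $n\in\mathbb{Z}$. Because $\omega_h^+\neq\omega_h^-$, at least one of them, say $\omega$, is distinct from the attracting fixed point of $g$, so $g^{-n}.\omega\to\omega_g$ as $n\to\infty$. The $O$-action on $\partial\Gamma$ is by homeomorphisms (Proposition~\ref{prop:prop(hyp-boundary(G))}), so $O$ fixes $\omega_g$, contradicting Lemma~\ref{lem:Gv notfix rep.pt(elt.scale>1)}. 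The main obstacle is the central step, where the normalization identity $h^{-n}oh^n\in O$ must be combined with the boundary dynamics of $h$; once this is in hand, the remaining pieces are routine bookkeeping on flat groups and standard dynamics of hyperbolic isometries.
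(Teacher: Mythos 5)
Your proposal is correct and follows essentially the same route as the paper: both exploit the fact that $H_1$ normalizes the minimizing subgroup $O$ to show that $O$ fixes the boundary endpoints of any hyperbolic element of $H_1$ (the paper phrases this as $O$ fixing the orbit $H_1.O$ pointwise and hence its limit set, you phrase it as a direct bounded-displacement estimate via $h^{-n}oh^n\in O$), and both then combine normality of $H_1$ in $H$ with the north--south dynamics of an element $g\in H\smallsetminus H_1$ of non-trivial scale to force $O$ to fix $\omega_g$, contradicting Lemma~\ref{lem:Gv notfix rep.pt(elt.scale>1)}. If anything, your final step is slightly more careful than the paper's, since you keep both endpoints $\omega_h^{\pm}$ and select one distinct from the \emph{attracting} point of $g$ before applying the dynamics, whereas the paper only records that $\omega_k$ differs from the repelling point of $h$.
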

\begin{proof} 
We will derive 
a contradiction 
to Lemma~\ref{lem:Gv notfix rep.pt(elt.scale>1)} 
from 
the assumption 
that 
there is 
a hyperbolic element
in~$H_1$  
and 
an (automatically hyperbolic) element
in $H\smallsetminus H_1$, 
thus 
establishing the claim.

Let $O$ be 
a minimizing subgroup 
for~$H$. 
The group~$O$ 
is the stabilizer 
in~$G$ 
of the point~$O$ 
in any rough Cayley graph 
of~$G$ 
constructed from 
$O$. 
The group $H_1$ 
normalizes
the subgroup~$O$. 
Hence 
the group~$O$ 
fixes 
the whole orbit 
$H_1.O$ 
pointwise. 
All boundary points 
fixed by hyperbolic elements 
in~$H_1$ 
are limit points 
of this orbit, 
hence 
the set 
of these points, 
$L_{hyp}(H_1)$ say, 
is fixed by 
the group~$O$ 
also. 

Choose 
a hyperbolic element~$k$ 
in $H_1$. 
Denote 
the repelling boundary point 
of~$k$ 
by $\omega_k\in L_{hyp}(H_1)$.  
Further 
choose  
an element~$h$ 
in $H\smallsetminus H_1$; 
which 
is possible 
because 
the rank 
of~$H$ 
is 
at least~$1$. 
Replacing, 
if necessary,  
$h$ by its inverse, 
we may assume that 
$h$ has 
non-trivial scale. 
Proposition~\ref{prop:elliptic->scale1} 
and 
Theorem~\ref{thm:Aut(hyperbolic-complex)-has-no-parabolics} 
imply that 
the element~$h$ 
is hyperbolic.  
%
Denote 
the repelling boundary point 
of~$h$ 
by~$\omega_h$. 

Since 
the subgroup~$O$ 
fixes 
$\omega_k\in L_{hyp}(H_1)$ 
and 
$h$ has 
non-trivial scale,  
$\omega_k$ is different from $\omega_h$, 
for otherwise 
we would have 
a contradiction 
to Lemma~\ref{lem:Gv notfix rep.pt(elt.scale>1)}.  
Because 
$H_1$ is 
normal 
in~$H$, 
the group~$H$ 
leaves 
the set $L_{hyp}(H_1)$
invariant. 
In particular, 
the sequence $\bigl(h^{-n}(\omega_k)\bigr)_{n\in\mathbb{N}}$, 
which converges to $\omega_h$ 
since $\omega_k\neq \omega_h$, 
is contained 
in~$L_{hyp}(H_1)$. 
By continuity 
of the action of $G$ 
on the hyperbolic compactification,  
$\omega_h$ 
is contained 
in~$L_{hyp}(H_1)$  
and hence 
is fixed by~$O$. 
This is 
the anticipated contradiction 
to Lemma~\ref{lem:Gv notfix rep.pt(elt.scale>1)}. 
\qed
\end{proof} 

We are 
now 
ready 
to treat 
the first case 
in the classification.  

\begin{proposition}\label{prop:F2<flatG=>rk=0}
Let $G$ be 
a totally disconnected, locally compact, hyperbolic group 
and 
$H$ a flat subgroup 
of $G$ 
that contains 
a non-abelian free group 
consisting of hyperbolic elements. 
Then the rank of~$H$ 
is~$0$.
\end{proposition}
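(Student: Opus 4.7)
The plan is to argue by contradiction: assume the rank of $H$ is at least $1$, and use the preceding lemma together with the free abelian structure of $H/H_1$ to force a non-trivial element of $H_1$ to be hyperbolic, contradicting Lemma~\ref{lem:flatG(rk>0)=>no-hyperb<unisc<G}.

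More concretely, suppose the rank of $H$ is at least $1$. By Theorem~\ref{thm:flat}, the quotient $H/H_1$ is a free abelian group, hence in particular abelian. Consequently the commutator subgroup $[H,H]$ is contained in $H_1$. Let $F \leq H$ be the non-abelian free subgroup consisting of hyperbolic elements given by hypothesis. Since $F$ is non-abelian, one can pick two free generators $a, b \in F$ and form the non-trivial commutator $c := [a,b]$. On the one hand $c$ is a non-identity element of the free group $F$, so by assumption $c$ is hyperbolic. On the other hand $c \in [H,H] \subseteq H_1$.

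This produces a hyperbolic element inside $H_1$, directly contradicting the conclusion of Lemma~\ref{lem:flatG(rk>0)=>no-hyperb<unisc<G}, which states that a flat subgroup of rank at least~$1$ in a totally disconnected, locally compact, hyperbolic group has no hyperbolic element in its uniscalar part $H_1$. Therefore our assumption was false and the rank of $H$ must equal~$0$.

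I do not expect any genuine obstacle here: the argument is essentially formal, combining Theorem~\ref{thm:flat} (free abelianness of $H/H_1$) with the previous lemma. The only point worth double-checking is that the hypothesis ``non-abelian free group consisting of hyperbolic elements'' is to be read as: every non-identity element is hyperbolic; this ensures that the commutator $[a,b]$, being a non-identity element of the free group, is itself hyperbolic. Once this is in hand the contradiction is immediate.
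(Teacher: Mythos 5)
Your proof is correct and is essentially the paper's argument: both rest on Lemma~\ref{lem:flatG(rk>0)=>no-hyperb<unisc<G} forcing $F\cap H_1$ to be trivial and on the free abelianness of $H/H_1$ from Theorem~\ref{thm:flat}. The paper phrases the contradiction as ``$F$ embeds in the abelian quotient $H/H_1$, which is absurd,'' while you localize it at the commutator $[a,b]$ landing in $H_1$ yet being hyperbolic --- the same argument with the witness made explicit.
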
 
\begin{proof}
Let $F$ be 
a non-abelian free subgroup 
of~$H$ 
consisting of 
hyperbolic elements. 
Assume 
by way of contradiction 
that 
the rank 
of~$H$ 
is at least~$1$. 
Using Lemma~\ref{lem:flatG(rk>0)=>no-hyperb<unisc<G}, 
we 
then 
conclude that 
the subgroup $H_1$ 
contains no hyperbolic elements. 
Then 
the restriction of 
the canonical map 
$H\to H/H_1$ 
to 
the subgroup~$F$ 
of~$H$ 
has trivial kernel. 
It follows that the abelian group~$H/H_1$ 
contains 
a non-abelian free group, 
which is absurd. 
Therefore the rank 
of~$H$ 
is~$0$ 
as claimed. 
\qed 
\end{proof}

The second case 
of the classification 
is easy. 

\begin{proposition}\label{prop:flatG-stab-nonempty-cpS=>rk=0} 
Let $G$ be 
a totally disconnected, locally compact, hyperbolic group 
and 
$H$ a flat subgroup 
of~$G$ 
that 
stabilizes 
a non-empty, compact subset 
of some rough Cayley graph 
of~$G$. 
Then 
the rank 
of~$H$ 
is~$0$. 
\end{proposition}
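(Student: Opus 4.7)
The plan is to pass to a finite-index normal subgroup of $H$ that is contained in a compact open subgroup of $G$, and then combine the scale-of-elliptic-element result with the structure theorem for flat groups.

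First, I would use Theorem~\ref{thm:existence(lf-roughCayleyGph)} together with the quasi-isometry of rough Cayley graphs (which carries bounded $H$-orbits to bounded ones) to arrange that the rough Cayley graph $\Gamma$ in which $H$ stabilizes a non-empty compact subset is locally finite. In such a graph, a compact subset meets only finitely many vertices, yielding a non-empty, finite, $H$-invariant vertex set $V$. The action of $H$ on $V$ factors through the finite group $\operatorname{Sym}(V)$, so its kernel $H'$ is a normal, finite-index subgroup of $H$ fixing every vertex in $V$; in particular $H' \subseteq G_v$ for any $v \in V$, which is a compact open subgroup of $G$ by Definition~\ref{def:rough-Gayley-Gph}.

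For every $h \in H'$, the closed cyclic subgroup $\overline{\langle h \rangle}$ sits inside the compact group $G_v$, so $h$ is topologically periodic. Proposition~\ref{prop:top-char(elliptics)} then yields that $h$ is elliptic, and Proposition~\ref{prop:elliptic->scale1} gives $s_G(h) = 1$; the same applies to $h^{-1} \in H'$, so $s_G(h) = s_G(h^{-1}) = 1$ for every $h \in H'$.

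To conclude, I would invoke the characterization from the flat-group theory of~\cite{tidy<:commAut(tdlcG)}: an element $h$ of a flat group $H$ with minimizing subgroup $O$ belongs to $H_1$ if and only if $s_G(h) = s_G(h^{-1}) = 1$. This places $H'$ inside $H_1$, so $H/H_1$ is finite; since it is free abelian by Theorem~\ref{thm:flat}, it is trivial, and the rank of $H$ is~$0$. The main obstacle is precisely this last scale-based characterization of $H_1$; if it is not available as a quotable statement, a direct argument uses that $O$ is tidy for every $h \in H$ (because $O$ is minimizing for $H$), so that the tidy decomposition $O = O_+ O_-$ satisfies $h(O_+) \supseteq O_+$ and $h^{-1}(O_-) \supseteq O_-$, and the scale conditions force both inclusions to be equalities, yielding $h(O) = O$.
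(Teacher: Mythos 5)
Your proof is correct and follows essentially the same route as the paper's: ellipticity forces scale~$1$ by Proposition~\ref{prop:elliptic->scale1}, which places the relevant elements in $H_1$ and kills the rank (the paper likewise leaves implicit the standard fact that $s_G(h)=s_G(h^{-1})=1$ with $O$ minimizing forces $h(O)=O$). The only difference is that your detour through a finite-index subgroup fixing a vertex is unnecessary: since $H$ stabilizes a non-empty compact set, every element of $H$ already has a bounded orbit and is therefore elliptic by definition, so one gets $H\subseteq H_1$ directly rather than via finiteness of $H/H_1$.
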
 
\begin{proof} 
The condition 
satisfied by~$H$ 
implies that 
all elements 
of~$H$ 
are elliptic. 
By  Proposition~\ref{prop:elliptic->scale1}, 
$H$ is 
contained in 
its subgroup~$H_1$ 
and 
the \flatrk\! 
of~$H$ 
is~$0$ 
as claimed. 
\qed 
\end{proof}

The next lemma 
proves that 
a quasi-geodesic ray 
converging to 
a boundary point, 
$\omega$ say, 
is uniformly close to 
any geodesic ray 
converging to~$\omega$. 
This lemma 
is used in  
the last cases 
in the classification, 
Proposition~\ref{prop:flatG-fix-1OR2boundary-pts=>rk le1} 
below. 

\begin{lemma}\label{lem:q-geodesic rays hit close near their boundary point}
Given 
real numbers~$\delta\ge 0$, $\lambda\ge 1$ and~$c\ge0$ 
there is 
a  constant~$R$ 
(depending on~$\delta$, $\lambda$ and~$c$ only) 
such that 
for 
any proper $\delta$-hyperbolic space, $X$: 
given a geodesic ray, $r$,  
and a $(\lambda, c)$-quasi-geodesic ray, $f$, in $X$
that converge to the same boundary point~$\omega$,  
the image of $f$ intersects 
the ball 
of radius~$R$ 
centred on any point 
sufficiently far out 
on $r$. 
\end{lemma}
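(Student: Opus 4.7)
The plan is to combine three standard facts from hyperbolic geometry: the Morse stability lemma for quasi-geodesic segments, the existence of limiting geodesic rays obtained from sequences of geodesic segments (using properness), and the asymptotic fellow-travel property of two geodesic rays converging to the same boundary point.

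First I would apply the Morse stability lemma to subsegments of~$f$. This lemma (see, for example, part~(ii) of Th\'eor\`eme~5.25 in~\cite{surGhyp-d'apresGromov}) provides a constant $M=M(\delta,\lambda,c)$ such that any $(\lambda,c)$-quasi-geodesic segment in a $\delta$-hyperbolic geodesic space lies within Hausdorff distance~$M$ of any geodesic segment with the same endpoints. For every $n\in\mathbb{N}$, the restriction $f|_{[0,n]}$ is such a segment from $f(0)$ to $f(n)$; choose a geodesic segment $\gamma_n$ from $f(0)$ to $f(n)$ and record that $d_H(f|_{[0,n]},\gamma_n)\le M$.

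Next I would pass to a limit. Since $X$ is proper and $f(n)\to\omega$ in the compactification, the Arzel\`a--Ascoli theorem yields a subsequence of $\gamma_n$ converging, uniformly on compacta, to a geodesic ray $\tilde r$ starting at $f(0)$ and converging to~$\omega$. The Hausdorff bound survives the limit, so $d_H(f,\tilde r)\le M$ as unparameterised subsets (any point of $\tilde r$ is a limit of points on $\gamma_n$ each within $M$ of $f|_{[0,n]}$, and properness produces an accumulation point of the corresponding points of $f$ at distance $\le M$).

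Finally I would compare the two geodesic rays $r$ and $\tilde r$, which share the boundary endpoint $\omega$. A standard thin-triangle argument applied to triangles with vertices $r(0)$, $\tilde r(0)$, and $\tilde r(T)$ for large $T$ (combined with the fact that in a $\delta$-hyperbolic space any two geodesics between the same pair of points stay within $2\delta$ of one another) shows that there is a threshold $T_0$---depending on $d(r(0),f(0))$ and hence on the specific rays---beyond which $r(t)$ lies within a constant $K=K(\delta)$ of the image of $\tilde r$. Setting $R:=M+K$, for $t\ge T_0$ the point $r(t)$ is within $K$ of some point of $\tilde r$ which is within $M$ of some point of~$f$, so $f$ meets $B(r(t),R)$.

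The main obstacle I expect is the careful bookkeeping of dependencies in the last step: the universal constants $M$ and $K$ must depend only on $\delta,\lambda,c$, whereas the threshold $T_0$ is forced to depend on $d(r(0),f(0))$ and so on the rays themselves. This is exactly what the phrase \emph{sufficiently far out} in the statement permits, so as long as the two universal bounds are clearly isolated from the ray-dependent threshold, the proof goes through.
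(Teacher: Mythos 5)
Your argument is correct and takes essentially the same route as the paper: the paper also first replaces $f$ by a geodesic ray at Hausdorff distance at most $H(\delta,\lambda,c)$ (citing part~(i) of Th\'eor\`eme~5.25 in Ghys--de la Harpe directly, where you rederive this ray version from the segment Morse lemma via Arzel\`a--Ascoli and properness), then invokes Proposition~7.2 there to find subrays of $r$ and of that geodesic ray within Hausdorff distance $16\delta$, and concludes with $R=H+16\delta$. Your careful separation of the universal constants from the ray-dependent threshold corresponds exactly to the paper's restriction to \emph{appropriate subrays}.
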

\begin{proof}
By 
part~$(i)$ of Th\`eor\'eme~5.25 in~\cite{surGhyp-d'apresGromov} 
there is a geodesic ray~$g$ 
at Hausdorff distance 
at most~$H$ 
from~$f$, 
where~$H$ 
depends 
on~$\delta$, $\lambda$ and~$c$ only. 
The geodesic ray~$g$ 
also 
converges to 
the boundary point~$\omega$. 
Hence, according to Proposition~7.2 in~\cite{surGhyp-d'apresGromov} 
appropriate subrays 
$r'$ and $g'$ 
of the respective rays~$r$ and~$g$ 
have 
Hausdorff distance 
at most~$16\delta$. 
Then for each point on $r'$, the ball with radius~$16\delta$ centred on that point intersects $g'$, and for each point on $g'$ the ball with radius~$H$ centred on that point intersects $f$. Hence the claim holds with $R = H + 16\delta$. 
\qed
\end{proof}

Finally, 
we cover  
the last two cases 
of the classification.

\begin{proposition}\label{prop:flatG-fix-1OR2boundary-pts=>rk le1}
Let $G$ be 
a totally disconnected, locally compact, hyperbolic group 
and 
$H$ a flat subgroup 
of~$G$ 
that fixes 
a boundary point 
or 
a pair of boundary points 
(not necessarily pointwise). 
Then 
the rank 
of~$H$ 
is at most~$1$. 
\end{proposition}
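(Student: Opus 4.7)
We argue by contradiction, assuming the flat-rank of $H$ is at least $2$. First we reduce to the case in which $H$ fixes two distinct boundary points pointwise. If $H$ stabilises a pair $\{\omega_1, \omega_2\}$ with $\omega_1 \neq \omega_2$ (possibly swapping them), we replace $H$ by its index-$2$ subgroup $H^0$ fixing both pointwise; since $H/H_1$ is free abelian, $H^0$ has the same flat-rank as $H$. If instead $H$ fixes only a single boundary point $\omega$, then the contrapositive of Proposition~\ref{prop:F2<flatG=>rk=0} combined with a standard ping-pong argument forces all hyperbolic elements of $H$ to share a common pair of boundary fixed points $\{\omega, \omega'\}$; conjugating any one such hyperbolic element by an arbitrary $g \in H$ then shows $g \cdot \omega' = \omega'$, reducing to the pointwise-fixing two-point case.

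Choose $h_1, h_2 \in H$ whose images in $H/H_1$ are $\mathbb{Z}$-linearly independent. Neither lies in $H_1$, and neither is elliptic, because by Proposition~\ref{prop:elliptic->scale1} an elliptic element $\eta$ has $s(\eta) = s(\eta^{-1}) = 1$, which for the minimizing subgroup $O$ forces $\eta O \eta^{-1} = O$ and hence $\eta \in H_1$; Theorem~\ref{thm:Aut(hyperbolic-complex)-has-no-parabolics} rules out parabolics. Thus $h_1$ and $h_2$ are hyperbolic, and since $H$ fixes both $\omega_1$ and $\omega_2$ their boundary fixed point sets equal $\{\omega_1, \omega_2\}$. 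After replacing $h_1$ or $h_2$ by its inverse if necessary, we may assume both attract to $\omega_1$. Fix a vertex $v_0$ in a rough Cayley graph $\Gamma$ of $G$ with compact open stabiliser $O_0$; then each $(h_i^n \cdot v_0)_{n \ge 0}$ is a quasi-geodesic ray in $\Gamma$ converging to $\omega_1$.

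The heart of the argument is to apply Lemma~\ref{lem:q-geodesic rays hit close near their boundary point} to both these quasi-geodesic rays together with a fixed geodesic ray $r$ to $\omega_1$: there is a constant $R$ such that the image of each of the two rays intersects every ball of radius $R$ centred on a sufficiently far-out point of $r$. Along a cofinal sequence $t_k \to \infty$ we may therefore choose exponents $n_1(k), n_2(k) \in \mathbb{N}$ with both $h_1^{n_1(k)} \cdot v_0$ and $h_2^{n_2(k)} \cdot v_0$ within distance $R$ of $r(t_k)$, so the element $\eta_k := h_2^{-n_2(k)} h_1^{n_1(k)}$ moves $v_0$ by at most $2R$, placing $\eta_k$ in the finite union of cosets $\{g \in G : g \cdot v_0 \in B(v_0, 2R)\}$ of $O_0$. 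By pigeonhole there exist $k \neq \ell$ with $\eta_k^{-1} \eta_\ell \in O_0$, and by spacing the $t_k$ far enough apart we can arrange $(n_1(k), n_2(k)) \neq (n_1(\ell), n_2(\ell))$. The element $\eta_k^{-1} \eta_\ell = h_1^{-n_1(k)} h_2^{n_2(k) - n_2(\ell)} h_1^{n_1(\ell)}$ fixes $v_0$, hence is elliptic, hence (as above) lies in $H_1$. This produces a non-trivial relation $(n_1(\ell) - n_1(k)) [h_1] + (n_2(k) - n_2(\ell)) [h_2] = 0$ in $H/H_1$, contradicting the independence of $[h_1]$ and $[h_2]$.

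The main obstacle will be the reduction step in the first paragraph, specifically confirming that when $H$ fixes only a single boundary point the second fixed point of any hyperbolic element in $H$ must also be fixed by the whole of $H$; the remaining technical delicacy is to choose the indices in the pigeonhole step so that the pairs $(n_1(k), n_2(k))$ and $(n_1(\ell), n_2(\ell))$ are genuinely different and the resulting relation is non-trivial.
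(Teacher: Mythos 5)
Your central pigeonhole argument is sound and is essentially the paper's own proof, but the reduction in your first paragraph contains a genuine gap. You claim that when $H$ fixes only a single boundary point $\omega$, the absence of a non-abelian free subgroup of hyperbolic elements (via the contrapositive of Proposition~\ref{prop:F2<flatG=>rk=0}) together with ping-pong forces all hyperbolic elements of $H$ to share a common \emph{pair} of fixed points $\{\omega,\omega'\}$. Ping-pong only produces a free subgroup from two hyperbolic elements whose fixed-point sets are \emph{disjoint}; it says nothing about two hyperbolic elements sharing exactly the one point $\omega$ while having distinct second fixed points, and that configuration genuinely occurs. The group $G=\mathbb{Z}\ltimes\mathbb{F}_q((t))$ of Example~\ref{ex:flatrk1-fixes-single-boundary-point} acts on the $(q+1)$-regular tree fixing a single end, is solvable (hence contains no non-abelian free subgroup), and its hyperbolic elements translate along many different lines through that end, so their second endpoints do not agree. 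Thus the implication you invoke is false as stated, and the reduction to the ``two points fixed pointwise'' case does not go through by the route you describe. (The standard argument that hyperbolic elements sharing exactly one fixed point contradict properness uses \emph{discreteness} of the action, which is exactly what fails here.)

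Fortunately the reduction is unnecessary: your main argument never uses the second fixed point $\omega_2$. All it needs is that $h_1$ and $h_2$, after possibly inverting one of them, have the same \emph{attracting} boundary point, and this is automatic whenever $H$ fixes a single boundary point $\omega$, since every hyperbolic element of $H$ then has $\omega$ among its two fixed points. So the correct structure is the opposite of yours: reduce the setwise two-point case to the one-point case by passing to the index-$2$ subgroup (as you do, and as the paper does), and then run the quasi-geodesic/pigeonhole argument directly for two elements sharing the attracting point $\omega$. With that repair your proof coincides with the paper's up to a cosmetic difference: the paper pigeonholes the translates $g.f_h,\dots,g^n.f_h$ of a single quasi-geodesic ray over the vertices of a ball of radius $R$, while you pigeonhole the elements $\eta_k=h_2^{-n_2(k)}h_1^{n_1(k)}$ over the finitely many cosets of the vertex stabiliser meeting a $2R$-ball; both versions, together with your (correct) observation that elliptic elements of a flat group lie in $H_1$, yield the required non-trivial relation in $H/H_1$.
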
 
\begin{proof}
We first 
reduce 
to the case 
where 
the flat subgroup~$H$ 
fixes 
a boundary point. 
The other case is where~$H$ 
fixes a pair of distinct boundary points 
without fixing the points. 
Then 
the subgroup 
of~$H$ 
that fixes 
both points 
is also flat 
and 
has index~$2$ 
in~$H$. 
Hence this subgroup
has the same rank 
as~$H$ and it suffices 
to prove the claim 
for  
it. 

Next 
we show that 
the images 
of any two 
hyperbolic elements, 
$g$ and $h$ say, 
that both fix 
a boundary point 
satisfy 
a nontrivial relation 
in~$H/H_1$, 
thus 
showing that 
$H$ can not contain 
two elements 
mapping to 
linearly independent 
elements 
in the quotient 
and 
thereby 
finishing 
the proof. 

Inverting 
one of $g$, $h$ 
if necessary, 
we may assume that 
$g$ and $h$ have 
the same attracting boundary point, 
$\omega$ say.  
Choose 
a vertex, 
$v$, 
in a rough Cayley graph, 
$\Gamma$,  
for~$G$. 
The map $f_h\colon \mathbb{N}\to \Gamma$ 
defined by $f_h(n):=h^n.v$ 
is a quasi-geodesic ray,  
with quasi-isometry-constants $(\lambda, c)$ 
say. 
Then all the maps $g^i.f_h$ with $i\geq0$
are $(\lambda, c)$-quasi-geodesic rays 
and
converge to~$\omega$. 

Choose a geodesic ray, 
$r$ say,  
ending in~$\omega$.
Denote by~$n$ 
a natural number 
larger than 
the maximal number 
of vertices 
of~$\Gamma$
that are 
contained in 
any closed ball 
whose radius is
the constant~$R$ 
provided by Lemma~\ref{lem:q-geodesic rays hit close near their boundary point}. 
Such a number~$n$ 
exists, 
because 
$G$ acts cocompactly 
on its rough Cayley graph~$\Gamma$. 
According to Lemma~\ref{lem:q-geodesic rays hit close near their boundary point}, 
we may choose 
a point 
sufficiently far out 
on the ray~$r$ 
such that 
all the quasi--geodesic rays $g.f_h,\ldots, g^n.f_h$ 
intersect the ball~$B$  
of radius~$R$ 
around it. 
All points 
of intersection 
of $g.f_h,\ldots, g^n.f_h$ 
with~$B$ 
are vertices and so, by our choice 
of~$n$,  
there are 
integers~$i$ and~$j$ 
with $0<i<j$ 
such that 
$g^i(h^p.v)=g^j(h^q.v)$ 
for some integers~$p$ and~$q$. 

%
The element~$h^{-q}g^{i-j}h^p$ 
fixes~$v$, 
hence is elliptic 
and 
has scale~$1$. 
The relation 
$(p-q)\,hH_1+(i-j)\,gH_1=0$ 
therefore holds 
in~$H/H_1$ and, 
since $j-i\neq0$, 
$hH_1$ and $gH_1$ 
are linearly dependent. 
\qed 
\end{proof}

The main result 
of the paper 
is 
now 
obtained by combining these cases. 

\begin{theorem}
The \flatrk\! of 
a totally disconnected, locally compact, hyperbolic group 
is at most~$1$. 
\end{theorem}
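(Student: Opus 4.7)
My plan is to take an arbitrary flat subgroup $H$ of $G$ and bound its rank by $1$, invoking the classification of isometric actions on Gromov-hyperbolic spaces from Woess as it applies to $H$ acting on a rough Cayley graph $\Gamma$ of $G$ (which exists and is hyperbolic by Theorem~\ref{thm:existence(lf-roughCayleyGph)} and Proposition~\ref{prop:hyperbolicity(roughCayley-graph)}). Since the flat-rank of $G$ is the supremum of the ranks of its flat subgroups, such a uniform bound yields the theorem.

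The Woess classification partitions the possibilities for an action by isometries on a hyperbolic space into four mutually exclusive types. First, the action has a non-empty bounded orbit on $\Gamma$ (equivalently, stabilises a non-empty compact subset). Second, it fixes exactly one boundary point. Third, it fixes exactly two boundary points (possibly exchanging them). Fourth, the group acting contains a non-abelian free subgroup each of whose non-trivial elements is hyperbolic. The strategy is simply to verify the bound in each case from the propositions already established in this section.

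If $H$ falls into the first case, Proposition~\ref{prop:flatG-stab-nonempty-cpS=>rk=0} gives rank~$0$. If $H$ fixes a boundary point or a pair of boundary points, Proposition~\ref{prop:flatG-fix-1OR2boundary-pts=>rk le1} yields rank at most~$1$. If $H$ contains a non-abelian free subgroup of hyperbolic elements, Proposition~\ref{prop:F2<flatG=>rk=0} gives rank~$0$. In every case the rank of $H$ is at most~$1$, completing the proof.

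The main (very mild) obstacle is to be sure that the Woess classification as formulated in~\cite{woess} applies to our situation. This requires knowing that every element of $H$ is elliptic or hyperbolic (there are no parabolics), which is exactly Theorem~\ref{thm:Aut(hyperbolic-complex)-has-no-parabolics}, and that the relevant dichotomy between ``fixes at most two boundary points'' and ``contains a free subgroup of hyperbolic elements'' is valid for general isometric actions on proper hyperbolic spaces, which is the content of the classification theorem cited. Once these inputs are in place, the proof reduces to a one-line case analysis.
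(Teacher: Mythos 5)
Your proposal is correct and follows essentially the same route as the paper: reduce to bounding the rank of an arbitrary flat subgroup $H$, apply the Woess classification of isometric actions of $H$ on a rough Cayley graph, and dispatch each type via Propositions~\ref{prop:F2<flatG=>rk=0}, \ref{prop:flatG-stab-nonempty-cpS=>rk=0} and~\ref{prop:flatG-fix-1OR2boundary-pts=>rk le1}. The paper likewise cites \S4C and Theorem~3 of~\cite{woess} for the applicability of the classification to $\Gamma$ and its hyperbolic compactification, so no further work is needed.
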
 
\begin{proof}
Let $G$ be 
a totally disconnected, locally compact, hyperbolic group 
and 
$H$ a flat subgroup 
of~$G$. 
Choose 
a connected, locally finite, rough Cayley graph,  
say~$\Gamma$, 
for~$G$. 
The graph~$\Gamma$ 
is Gromov-hyperbolic and \S4C of~\cite{woess} explains how the results of that paper apply to $\Gamma$ and its hyperbolic compactification. In particular, \cite[Theorem~3]{woess} 
lists 
the possible types 
of actions 
for~$H$ on $\Gamma$.  
Each  
of these possible types 
is covered by 
either Proposition~\ref{prop:F2<flatG=>rk=0} 
(type~$(a)$), 
Proposition~\ref{prop:flatG-stab-nonempty-cpS=>rk=0} 
(type~$(b)$) 
or 
Proposition~\ref{prop:flatG-fix-1OR2boundary-pts=>rk le1} 
(types~$(c)$ and~$(d)$) and the rank 
of~$H$ 
is seen to be 
at most~$1$ 
in all cases. 
\qed
\end{proof}

\section{Flat subgroups, space of directions and hyperbolic boundary}\label{sec:furtherProp(flatGs)}

In this section 
we present 
the proof of 
Theorem~\ref{thm:flatrk-via-directions}, which shows that discreteness of the space of directions (defined in \cite{direction(aut(tdlcG))}) also imposes a bound of~1 on the \flatrk\!. The proof is followed by two conjectures that propose further links between flat subgroups of a hyperbolic, totally disconnected, locally compact group, its space of directions and hyperbolic boundary.

\begin{proof}[of {Theorem~\ref{thm:flatrk-via-directions}}]
\label{pref:pf(thm:flatrk-via-directions)}
The space of directions 
of a totally disconnected, locally compact group 
of \flatrk\!~$k$ 
contains a $k$-cell  
by Proposition~23 
in~\cite{direction(aut(tdlcG))}.  
Hence 
a group 
with a discrete space of directions 
can have \flatrk\! 
at most~$1$. 
Furthermore, 
since 
a group 
has \flatrk\!~$0$ 
if and only if 
its space of directions 
is empty, 
we even have that 
a group with 
a non-empty, discrete space of directions 
has \flatrk\! 
equal to~$1$. 
\qed 
\end{proof}

The argument in the above proof in fact shows that the \flatrk\! of a group is bounded by the dimension of the space of directions. This pseudo-metric space need not be finite dimensional manifold however, as the example (a group having \flatrk\!~1) in~\cite[5.2.3]{direction(aut(tdlcG))} illustrates. 

Theorem~\ref{thm:flatrk-via-directions} applies to closed subgroup of the automorphism group of a locally finite tree, by Proposition~36(2) 
in~\cite{direction(aut(tdlcG))}. 
In fact, the bound on the flat-rank of such groups 
also follows from Theorem~\ref{MainThm-variant} 
in case 
they are 
compactly generated, 
because they are then hyperbolic.\\
\begin{proposition}
\label{prop:tree-->hyperbolic}
Let $G$ be a compactly generated topological group acting minimally on a
 tree, $X$, such that the stabilizers of vertices are open subgroups of  $G$.
 Then $G$ acts with finitely many orbits on the vertices.
\end{proposition}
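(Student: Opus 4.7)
The plan is to exhibit a finite subtree $T_0 \subseteq X$ whose $G$-translates cover $X$, after which minimality will force the conclusion. The hypothesis of compact generation together with openness of vertex stabilizers is exactly what is needed to build such a $T_0$.

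First I would fix a symmetric compact generating set $K$ of $G$ and a vertex $v_0 \in VX$. Because the stabilizer $G_{v_0}$ is open and $K$ is compact, $K$ is covered by finitely many left cosets of $G_{v_0}$, so $K \cdot v_0$ is a finite subset of $VX$. I would then let $T_0$ be the convex hull in $X$ of the finite set $\{v_0\} \cup K\cdot v_0$; since $X$ is a tree, $T_0$ is a finite subtree.

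Next I would set $T := \bigcup_{g\in G} g\cdot T_0$, which is evidently $G$-invariant and contains $v_0$. The point requiring verification is connectedness of $T$. For this, given $g = k_1 \cdots k_n \in G$ with $k_i \in K$, I would show by induction on $n$ that $v_0$ and $g\cdot v_0$ are joined by a path in $T$: the segment from $k_1\cdots k_{i-1}\cdot v_0$ to $k_1\cdots k_i\cdot v_0$ is the $(k_1\cdots k_{i-1})$-translate of the segment from $v_0$ to $k_i\cdot v_0$, which lies inside $T_0$; concatenating over $i$ yields a path from $v_0$ to $g\cdot v_0$ inside $T$. Thus $T$ is a non-empty connected $G$-invariant subgraph of $X$, hence a subtree.

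Finally, by minimality of the $G$-action, $T = X$. Since $T_0$ has only finitely many vertices and every vertex of $X$ lies in some $G$-translate of $T_0$, the group $G$ acts with only finitely many orbits on $VX$. The only mildly delicate step is the connectedness argument for $T$, but it is a routine word-length induction and I do not foresee any substantive obstacle.
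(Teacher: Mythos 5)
Your proof is correct, and it takes a genuinely different route from the paper's. The paper imports Bass--Serre covering theory: it forms the quotient graph of groups $\mathbf{A}$ of the action, notes that the fundamental groups of its finite subgraphs of groups are open subgroups covering $G$, uses compactness of a generating set to find a finite subgraph of groups $\mathbf{A}'$ whose fundamental group is all of $G$, and then invokes \cite[Proposition~7.12]{Bass} together with minimality to get $\mathbf{A}'=\mathbf{A}$, so that $\mathbf{A}$ is finite. You instead argue directly inside the tree: the set $K\cdot v_0$ is finite because the open cosets of $G_{v_0}$ cover the compact set $K$, the convex hull $T_0$ of $\{v_0\}\cup K\cdot v_0$ is a finite subtree, the union $T$ of its translates is connected by your word-length induction and is $G$-invariant, and minimality forces $T=X$. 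Both arguments use the hypotheses in the same essential way (compactness of the generating set against openness of vertex stabilizers to extract finiteness, then minimality to promote a cocompact invariant subtree to all of $X$), but yours is more elementary and self-contained: it avoids having to set up graphs of groups for topological groups and to justify that fundamental groups of subgraphs of groups are open, a point the paper only asserts. The paper's route buys the additional structural conclusion that $G$ is the fundamental group of a finite graph of groups; on the other hand your construction also yields finitely many edge orbits at no extra cost, since the translates of the finite subtree $T_0$ cover the edges of $X$ as well.
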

\begin{proof}
The argument follows that of \cite[Proposition~7.9(b)]{Bass} which establishes the claim for discrete $G$.  Let $\mathbf{A}$ denote the graph of groups arising from the
 action of $G$ on $X$. Let $G_{\mathbf{B}}$ denote the fundamental group of a subgraph of groups $\mathbf{B}$ of $\mathbf{A}$.  Note
that $G_{\mathbf{B}}$ is always an open subgroup of~$G$.  Clearly the groups $G_{\mathbf{B}}$, where $G_{\mathbf{B}}$ ranges over all finite subgraphs of groups form an open covering of $G$. Because $G$ has a a compact generating set we see that finitely many of the groups $G_{\mathbf{B}}$, with $\mathbf{B}$ a finite subgraph of groups, cover the generating set. Indeed, one sees from this that there is a finite subgraph of groups, $\mathbf{A}'$,
 such that the fundemental group of $\mathbf{A}'$ contains the generating set and thus
 the fundamental group of $\mathbf{A}'$ is equal to $G$.  By \cite[Proposition~7.12]{Bass} we can now conclude that because the action is minimal that $\mathbf{A}' =\mathbf{A}$.  The graph of groups $\mathbf{A}'$ is finite and thus $\mathbf{A}$ is also finite and hence the group $G$ has only finitely many orbits on both the vertices and edges of $X$.
\qed
\end{proof}
\begin{corollary}
\label{cor:tree-->hyperbolic}
Let $G$ be a compactly generated group acting on a tree, $X$, such that the stabilizers of vertices
 are compact open subgroups of $G$. Then $G$ is hyperbolic.
\end{corollary}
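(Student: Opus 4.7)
The plan is to construct a $G$-invariant, connected, locally finite subtree $T'\subseteq X$ on which $G$ acts cocompactly with compact open vertex stabilizers, and then to invoke Proposition~\ref{prop:hyperbolicity(cocp-<Gs(Aut(lf-Gromov-hyp-complexes))} (using that any tree is $0$-hyperbolic) to conclude that $G$ is a hyperbolic topological group. Building $T'$ directly from a compact generating set avoids any case analysis about minimality of the action or about $G$ possibly fixing an end of $X$, which could obstruct direct application of Proposition~\ref{prop:tree-->hyperbolic}.

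I would fix a compact symmetric generating set $S$ of $G$ and a vertex $v_0$ of $X$. Since $G_{v_0}$ is open, the orbit $G.v_0$ is discrete in the quotient topology, so $S.v_0$, being the continuous image of the compact set $S$ in a discrete space, is finite. Let $F$ be the finite subtree spanned by geodesics in $X$ from $v_0$ to each point of $S.v_0$ and put $T':=G.F$. Connectedness of $T'$ would be verified by induction on $S$-word length: each translate $h.F\subseteq T'$ contains a path from $h.v_0$ to $hs.v_0$ for every $s\in S$, so concatenation along any word representing $g\in G$ yields a path from $v_0$ to $g.v_0$ inside $T'$. The action of $G$ on $T'$ has at most $|V(F)|$ vertex orbits and is therefore cocompact.

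The principal remaining step is to show $T'$ is locally finite. For a vertex $v$ of $T'$, the edges at $v$ fall into finitely many $G$-orbits (since $G$ has finitely many edge orbits in $T'$), and each such $G$-orbit meets $v$ in a single $G_v$-orbit. The pointwise stabilizer of an edge equals the intersection $G_u\cap G_v$ of the (open) stabilizers of its endpoints, hence is an open subgroup of the compact group $G_v$, so this $G_v$-orbit is finite. Thus every vertex has finite degree in $T'$.

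I expect this local finiteness verification to be the main subtlety, relying as it does on recognising edge stabilizers as open subgroups of the compact vertex stabilizers; a smaller check is the connectedness of $T'$, obtained by tracing out words in the generators. Once $T'$ has been produced as a locally finite, connected, Gromov-hyperbolic complex on which $G$ acts cocompactly with compact open point stabilizers, Proposition~\ref{prop:hyperbolicity(cocp-<Gs(Aut(lf-Gromov-hyp-complexes))} immediately yields that $G$ is hyperbolic.
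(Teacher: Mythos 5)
Your argument is correct, and it takes a genuinely different route from the paper. The paper splits into cases: if every element of $G$ is elliptic, compact generation forces $G$ to be compact (via \cite[Proposition~7.2]{Bass}), hence trivially hyperbolic; otherwise it passes to the minimal invariant subtree spanned by the axes of hyperbolic elements, quotients by the compact kernel, and invokes Proposition~\ref{prop:tree-->hyperbolic} --- whose proof runs through Bass--Serre theory (graphs of groups and \cite[Propositions~7.9(b) and 7.12]{Bass}) --- to conclude that the action has finitely many orbits, so that the subtree is locally finite and quasi-isometric to a rough Cayley graph. You instead build the cocompact invariant subtree $T'=G.F$ directly from a compact generating set, which avoids both the case analysis and any appeal to Bass--Serre theory; your local finiteness argument (edge stabilizers contain the open intersection $G_u\cap G_v$ of the two vertex stabilizers, hence have finite index in the compact group $G_v$) is the standard mechanism also underlying Theorem~\ref{thm:existence(lf-roughCayleyGph)}, here applied inside $X$ rather than to an abstract orbit graph. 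What the paper's route buys is Proposition~\ref{prop:tree-->hyperbolic} itself, a statement of independent interest about minimal actions; what yours buys is a shorter, more self-contained and uniform proof of the corollary. One small correction: a single $G$-orbit of edges can meet a vertex $v$ in up to \emph{two} $G_v$-orbits (one for each endpoint of a representative edge), not necessarily one; this does not affect your conclusion, since the degree of $v$ is still a finite union of finite $G_v$-orbits.
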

\begin{proof}
If $G$ consists only of elliptic elements, then, since $G$ is compactly generated, \cite[Proposition~7.2]{Bass} implies that it is compact. Hence $G$ is in this case trivially hyperbolic.

Otherwise, $G$ contains a hyperbolic element and the union of all axes of all hyperbolic elements is a minimal $G$-invariant subtree of $X$. Replacing $X$ by this subtree and $G$ by its quotient by the (compact) stabilizer of this subtree, it may be assumed that the action is minimal.  By Proposition~\ref{prop:tree-->hyperbolic}, $G$ acts with only finitely many orbits on the edges of $X$.  Hence $X$ is locally finite and is quasi-isometric to a rough Cayley graph of $G$, which must therefore be hyperbolic.
\qed
\end{proof}
The following conjecture 
asks for a common extension of our Theorem~\ref{MainThm-variant}
and Proposition~36(2) 
in~\cite{direction(aut(tdlcG))}.

\begin{conjecture}
\label{conj:dir(hypGs)} 
Let $G$ be 
a hyperbolic, totally disconnected, locally compact group. 
Then the following holds. 
\begin{enumerate}
\item 
The map 
which assigns 
each element 
of non-trivial scale 
to its attracting boundary point 
defines 
an injection of 
the set of directions 
of~$G$ 
into 
the hyperbolic boundary. 
\item 
Elements 
of~$G$  
with distinct directions 
have pseudo-distance 2. 
\end{enumerate} 
\end{conjecture}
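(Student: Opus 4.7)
The plan is to exploit the identification, provided by Proposition \ref{prop:prop(hyp-boundary(G))}, between the hyperbolic boundary of $G$ and the Gromov boundary of a locally finite rough Cayley graph $\Gamma$ for $G$, together with the topological characterization of elliptic elements given in Proposition \ref{prop:top-char(elliptics)}. Throughout, fix such a $\Gamma$ and a vertex $v \in V\Gamma$.

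For part (1), let $h, h' \in G$ be elements of non-trivial scale with the same attracting boundary point $\omega$. By Proposition \ref{prop:elliptic->scale1} and Theorem \ref{thm:Aut(hyperbolic-complex)-has-no-parabolics}, both $h$ and $h'$ act as hyperbolic isometries on $\Gamma$. The maps $n \mapsto h^n.v$ and $n \mapsto (h')^n.v$ are quasi-geodesic rays converging to $\omega$. Lemma \ref{lem:q-geodesic rays hit close near their boundary point} then guarantees that the two rays remain within a uniform distance $R$ of one another far enough out, and local finiteness of $\Gamma$ yields arbitrarily large pairs $(n_k, m_k)$ with $h^{n_k}.v = (h')^{m_k}.v$, equivalently $(h')^{-m_k} h^{n_k} \in G_v$, a compact open subgroup. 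I would then use this recurrence to match the nested sequences of compact open subgroups used in \cite{direction(aut(tdlcG))} to define the directions of $h$ and $h'$, showing that they represent the same direction. Since each hyperbolic isometry has a unique attracting boundary point, this gives the required injection.

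For part (2), suppose $h, h'$ have distinct directions. By part (1) their attracting boundary points $\omega_h$, $\omega_{h'}$ are distinct. The orbit rays $(h^n.v)_n$ and $((h')^n.v)_n$ then diverge linearly in $\Gamma$, so if $(h')^{-m} h^n$ belongs to a fixed compact open subgroup $O \supseteq G_v$, both $n$ and $m$ must remain bounded. This absence of arbitrarily deep common recurrence between the contraction patterns of $h$ and $h'$ should force the pseudo-distance between their directions to take the maximal value $2$.

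The main obstacle is the translation in part (1): the graph-theoretic coincidence $h^{n_k}.v = (h')^{m_k}.v$ must be converted into a statement about the algebraic objects used in \cite{direction(aut(tdlcG))} to define directions — namely the asymptotic behaviour of the contraction subgroups $\bigcap_{j\geq 0} h^j O h^{-j}$ and their analogues for $h'$. Matching these algebraic invariants to the graph-theoretic recurrence, and then comparing them in the precise pseudo-metric of \cite{direction(aut(tdlcG))}, is the technical heart of the argument; I expect both parts of the conjecture to stand or fall together on this translation.
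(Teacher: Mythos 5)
You are attempting to prove Conjecture~\ref{conj:dir(hypGs)}. The paper states this explicitly as an open conjecture (proposed as a common extension of Theorem~\ref{MainThm-variant} and Proposition~36(2) of \cite{direction(aut(tdlcG))}) and offers no proof, so there is nothing in the paper to compare your argument against; a correct argument would be a new result, and your sketch does not yet supply one. You concede yourself that the ``technical heart'' --- translating graph-theoretic recurrence into the algebraic data (tidy subgroups, contraction groups, and the pseudo-metric of \cite{direction(aut(tdlcG))}) that actually define directions --- is left open, and that is precisely where the conjecture lives. Beyond that, two concrete steps fail as written. First, in part (1), Lemma~\ref{lem:q-geodesic rays hit close near their boundary point} together with local finiteness gives only that for all large $n$ there is an $m$ with $d(h^n.v,(h')^m.v)\le R$, i.e.\ that $(h')^{-m}h^n$ lies in the fixed compact set $\{g\in G\colon d(g.v,v)\le R\}$; it does not give exact coincidences $h^{n_k}.v=(h')^{m_k}.v$, so the conclusion $(h')^{-m_k}h^{n_k}\in G_v$ is unjustified (in a tree, two hyperbolic elements with a common attracting end need not have orbit rays that ever meet). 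Second, you never address well-definedness: for the assignment to ``define an injection of the set of directions'' one must first show that two elements at pseudo-distance $0$ have the same attracting boundary point; your argument only attacks the converse implication.

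For part (2) the gap is wider still: ``distinct directions'' means only that the pseudo-distance is positive, whereas the conjecture asserts it equals the maximal value $2$. Your observation that the orbit rays diverge linearly shows at best that $(h')^{-m}h^n$ eventually leaves every compact set; converting this into the precise limit of normalized indices that computes the pseudo-distance in \cite{direction(aut(tdlcG))}, and showing that this limit is $2$ rather than some intermediate value, is exactly the quantitative content that is missing. In short, the proposal is a reasonable research plan --- identify the boundary via Proposition~\ref{prop:prop(hyp-boundary(G))}, use Proposition~\ref{prop:elliptic->scale1} and Theorem~\ref{thm:Aut(hyperbolic-complex)-has-no-parabolics} to see that elements of non-trivial scale act as hyperbolic isometries, and try to read off directions from orbit rays --- but it is not a proof, and the specific deductions above are either incorrect or absent.
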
 

The next conjecture asks whether the relationship between flat subgroups and the geometry of the rough Cayley graph that may be observed in automorphism groups of trees
or in the setting 
of~\cite{flatrk(AutGs(buildings))} 
holds for hyperbolic totally disconnected, locally compact groups in general. 

\begin{conjecture}
\label{conj:struc(Gs(flatrk1))} 
Suppose that~$G$ 
is 
a hyperbolic, totally disconnected, locally compact group 
which 
does not fix 
a point of 
its hyperbolic boundary. 
Then 
every flat subgroup  
of \flatrk\!~$1$,  
$H$ say,  
of~$G$  
has a limit set 
that contains 
$2$ elements, 
which are both fixed 
by 
$H$. 
%
The group~$H_1$ 
is relatively compact 
and 
is equal to 
the set 
of elliptic elements 
of~$H$.  
\end{conjecture}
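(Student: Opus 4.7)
The plan is to proceed in four stages: pin down the algebraic structure of $H$, identify $H_1$ with the elliptic elements of $H$, compute the limit set, and finally upgrade setwise to pointwise fixation of the two boundary points while deducing relative compactness of $H_1$. Since $H$ has \flatrk\!~$1$, Theorem~\ref{thm:flat} gives $H/H_1\cong \mathbb{Z}$. Pick $h\in H$ projecting to a generator; replacing $h$ by $h^{-1}$ if necessary, assume $s_G(h)>1$. Proposition~\ref{prop:elliptic->scale1} and Theorem~\ref{thm:Aut(hyperbolic-complex)-has-no-parabolics} then force $h$ to be hyperbolic, with attracting and repelling boundary points $\omega_+$ and $\omega_-$. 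Lemma~\ref{lem:flatG(rk>0)=>no-hyperb<unisc<G} rules out hyperbolic elements in $H_1$, and the no-parabolics theorem then forces every element of $H_1$ to be elliptic.

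For the reverse inclusion, note that $H_1$ is the stabilizer of the base vertex of the rough Cayley graph attached to a minimizing subgroup $O$ for $H$, hence is open in $H$, so the quotient $H/H_1\cong\mathbb{Z}$ carries the discrete topology. An elliptic $e\in H$ is topologically periodic by Proposition~\ref{prop:top-char(elliptics)}, so $\overline{\langle e\rangle}$ is compact; its image in the discrete torsion-free group $\mathbb{Z}$ must therefore be trivial, giving $e\in H_1$. Combined with the previous paragraph, this establishes that $H_1$ coincides with the set of elliptic elements of $H$.

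To identify the limit set, write a general element of $H$ as $h_1h^k$ with $h_1\in H_1$ and $k\in\mathbb{Z}$. Adapting the uniform-diameter argument from the proof of Lemma~\ref{lem:Gv notfix rep.pt(elt.scale>1)} — using that each element of $H_1$ has scale $1$ and normalizes $O$, so $H_1$ preserves the orbit $H_1.O$ pointwise and acts with uniformly bounded displacement near the geodesic axis of $h$ — one shows that the $H_1$-orbit of a base vertex is bounded. Consequently, if a sequence $(h_1^{(n)}h^{k_n})$ has a boundary limit, then $k_n\to\pm\infty$ and that limit is $\omega_\pm$; conversely both values are attained. Hence the limit set of $H$ equals $\{\omega_+,\omega_-\}$.

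The decisive and hardest step is to promote the manifest $H$-invariance of $\{\omega_+,\omega_-\}$ to pointwise fixation and then to deduce the full relative compactness of $H_1$. Since $H$ permutes this two-point set, the pointwise stabilizer $H'$ in $H$ has index at most~$2$; the task is to exclude the swap. Suppose some $h_1\in H$ interchanges $\omega_+$ and $\omega_-$. Then $h_1hh_1^{-1}$ is hyperbolic with attracting point $\omega_-$, yet lies in the coset $hH_1$, so $h_1hh_1^{-1}=hh_2$ for some $h_2\in H_1$. I expect the scale-asymmetry encoded by Lemma~\ref{lem:Gv notfix rep.pt(elt.scale>1)}, applied to both $h$ and to $h^{-1}$, together with continuity of the action on the hyperbolic compactification, to contradict the equality $h_1hh_1^{-1}=hh_2$; the global hypothesis that $G$ fixes no boundary point is presumably needed to exclude degenerate configurations where the conjugating $h_1$ arises from a $G$-equivariant symmetry of $\{\omega_+,\omega_-\}$. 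Once $H$ is shown to fix both points, $H_1\subseteq G_{\omega_+}\cap G_{\omega_-}$, and a dynamical argument on the boundary modelled on the tree case — where the stabilizer of two distinct ends is compact modulo translation along the axis, and $H_1$ contains no translation — should yield that $H_1$ is relatively compact. This final dynamical step, and the exclusion of the swap, is where I anticipate the main technical obstacle.
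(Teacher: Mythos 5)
This statement is stated in the paper as Conjecture~\ref{conj:struc(Gs(flatrk1))}: the authors give no proof of it, and in fact Example~\ref{ex:flatrk1-fixes-single-boundary-point} is included precisely to show that the hypothesis you never actually invoke (that $G$ fixes no point of its hyperbolic boundary) cannot be dropped. So there is no proof in the paper to compare yours against; your proposal must stand on its own, and it does not yet.

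The first two stages are sound and are the easy part. From Theorem~\ref{thm:flat} you get $H/H_1\cong\mathbb{Z}$; Lemma~\ref{lem:flatG(rk>0)=>no-hyperb<unisc<G} plus Theorem~\ref{thm:Aut(hyperbolic-complex)-has-no-parabolics} show every element of $H_1$ is elliptic; and since $H_1=H\cap N_G(O)$ is open in $H$, a topologically periodic element has finite image in the discrete torsion-free group $H/H_1$, hence lies in $H_1$. That correctly identifies $H_1$ with the elliptic elements of $H$. But the three remaining claims are exactly where the proposal stops being a proof. (1) Your limit-set computation rests on ``the $H_1$-orbit of a base vertex is bounded,'' which by Abels' Heine--Borel property (as used in Proposition~\ref{prop:top-char(elliptics)}) is essentially the relative compactness of $H_1$ --- one of the conclusions to be proved. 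Lemma~\ref{lem:Gv notfix rep.pt(elt.scale>1)} does not adapt to give this: its uniform-diameter argument bounds the $V$-orbits of the points $h^{-n}.v$ for an \emph{open} subgroup $V$ assumed to fix $\omega_h$ and a vertex, and concludes $s(h)=1$ by contradiction; it says nothing about orbits of the group $H_1$, which consists of elliptic elements but is not thereby bounded (a group all of whose elements are elliptic need not have bounded orbits). Even granting bounded $H_1$-orbits of $v$, your passage from $H=\bigcup_k H_1h^k$ to the limit set $\{\omega_+,\omega_-\}$ needs the displacements $d(h_1.(h^{k}.v),h^{k}.v)$ to be controlled uniformly in $k$, which is not addressed. (2) The exclusion of an element swapping $\omega_+$ and $\omega_-$ is only ``expected'' to follow from a scale asymmetry, with no argument given, and this is where the hypothesis on $G$ not fixing a boundary point must enter --- the Example shows that without some such hypothesis the conclusion is false, yet your sketch never uses it. (3) The final deduction of relative compactness of $H_1$ from $H_1\subseteq G_{\omega_+}\cap G_{\omega_-}$ is again only ``modelled on the tree case.'' You candidly flag (2) and (3) as anticipated obstacles; together with (1) they constitute the entire content of the conjecture beyond the routine reductions, so the proposal should be regarded as a plausible strategy rather than a proof.
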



%


%
The necessity of the hypothesis that the group should not fix a point on the hyperbolic boundary 
is shown by 
the following example. 

\begin{example}
\label{ex:flatrk1-fixes-single-boundary-point} 
Let $G$ be the semidirect product ${\mathbb Z}\ltimes \mathbb{F}_q((t))$, where $\mathbb{F}_q((t))$ is the ring of formal Laurent series over the finite field $\mathbb{F}_q$ and ${\mathbb Z}$ acts by multiplication by $t$. This group is isomorphic to the group of matrices of the form $\left(\begin{array}{cc}t^n &f \\0 & 1\end{array}\right)$ where $n\in {\mathbb Z}$ and $f\in \mathbb{F}_q((t))$. Then $G$ acts faithfully and co-compactly on the Bruhat-Tits-tree of $SL_2(\mathbb{F}_q((t)))$, a homogeneous tree where every vertex has valency $q+1$, and so is a hyperbolic group by Proposition~\ref{prop:hyperbolicity(cocp-<Gs(Aut(lf-Gromov-hyp-complexes))}. (Although $G$ is not a subgroup of $SL_2(\mathbb{F}_q((t)))$, it does act on this group by conjugation and this action induces an action on the tree.) 

Put $O$ equal to $\mathbb{F}_q((t))$, a compact open subgroup of $G$. Direct calculation shows that, if $g=(n,f)\in G$, then $|gOg^{-1} : gOg^{-1}\cap O|$ is equal to~1 if $n\geq0$ and to $q^{-n}$ if $n<0$ and that these are the minimum possible. Hence $O$ is minimizing for $G$ and $G$ is flat. 

However $G$ does not satisfy the last hypothesis of the conjecture because it fixes a point on the hyperbolic boundary, in this
case the set of ends of the tree. It does not satisfy the conclusions because there is no other end of the tree fixed by $G$. Furthermore, as the above calculation shows, $G_1 =   \mathbb{F}_q((t))$ which is the set of elliptic elements in $G$, and this group 
 is not
compact. 
\end{example}
%


\section{Examples of simple groups of flat rank at most~$1$}\label{sec:examples}

Here 
we list examples 
of \textit{simple}, totally disconnected, locally compact groups 
whose flat rank 
is 
at most~$1$. 
We expect 
none of the listed groups 
to have 
\flatrk\!~$0$. 
Indeed, in any given case 
it is usually easy 
to exhibit 
a hyperbolic element 
of non-trivial scale.

In~\cite{arbre}, 
Tits showed that 
many  
closed subgroups of 
automorphism groups 
of locally finite trees 
are 
simple and provided 
concrete constructions 
of examples 
in terms of 
$a$-coverings. 
As seen in the previous section, these groups 
have \flatrk\! at most~$1$. 

Haglund and Paulin, 
in~\cite{simpl(G-aut(courb-))},  
adapted Tits' methods 
to automorphism groups 
of negatively curved complexes, 
thus providing 
many more 
totally disconnected, locally compact, non-discrete, non-linear, simple groups. 
In order to 
be able 
to formulate 
an analogue 
of Tits' property~(P), 
a central assumption 
in~\cite{arbre}, 
they 
introduced 
an axiomatic framework, 
namely, spaces with walls, 
which allowed them 
to generalize 
Tits' result 
to groups 
acting 
on hyperbolic spaces with walls; 
see Th\'eor\`eme~6.1
in their paper. 

The groups studied by Haglund and Paulin
are non-discrete 
under fairly general conditions; 
compare their Lemme~3.6. 
We suspect that 
all non-discrete groups~$G^+$,  
where~$G$ 
satisfies the conditions 
of~\cite[Th\'eor\`eme~6.1]{simpl(G-aut(courb-))},  
act cocompactly 
on the hyperbolic graph 
associated to 
the space with walls 
in the statement of 
that theorem. 
If so, 
such a group~$G^+$ 
is 
a totally disconnected, locally compact, non-discrete, simple, hyperbolic group 
as a consequence of 
Proposition~\ref{prop:hyperbolicity(cocp-<Gs(Aut(lf-Gromov-hyp-complexes))} and thus has 
\flatrk\! 
at most~$1$ 
by Theorem~\ref{MainThm-variant}.

While 
there is some uncertainty 
as to whether 
the group~$G^+$ 
associated to 
a general group 
satisfying 
the conditions 
of Th\'eor\`eme~6.1 
in Haglund and Paulin's paper 
acts cocompactly, 
all concrete examples 
given in that paper 
do act cocompactly. 
These examples are 
\begin{enumerate} 
\item 
the group 
of type-preserving automorphisms 
of a Bourdon building 
(Th\'eo\-r\`eme~1.1); 
\item 
a subgroup 
of finite index in 
the automorphism group of 
a Benakli-Haglund building 
(Th\'eo\-r\`eme~1.2); 
\item 
a subgroup of 
finite index in 
the automorphism group of 
the Cayley graph of 
a hyperbolic, non-rigid Weyl group 
with respect to 
its standard system of 
generators 
(Th\'eor\`eme~1.3); 
\item 
subgroups 
of finite index in 
the automorphism groups of 
certain 
even polyhedral complexes 
(Th\'eor\`eme~1.4). 
\end{enumerate}
That 
these concrete examples 
do act cocompactly 
is 
no accident. 
It is difficult 
to construct 
complexes with 
uncountable automorphism groups;  
most constructions 
of such complexes 
start from 
a discrete group 
acting cocompactly 
on some complex. 
A notable exception 
is 
the horocyclic product 
of two locally finite trees 
with different valencies; 
while 
the automorphism group 
of this complex 
acts cocompactly,  
there is 
no discrete subgroup 
doing the same 
as shown 
in~\cite{qisos+rig(solvGs)}.

\section{Hyperbolic orbits for groups of automorphisms}
\label{sec:hyperbolic orbits}

The idea of the proof
of Theorem~\ref{thm:altMainThm} 
is straightforward. 
If 
$\mathcal{A}$ contains 
a flat subgroup 
whose rank 
is~$2$, 
then 
there is an orbit of $\mathcal{A}$ 
which contains a subset 
which `looks like' $\mathbb{Z}^2$; 
this is inconsistent 
with the assumption that 
$\mathcal{A}$ has a hyperbolic orbit. 
Since 
we are now dealing with 
spaces 
which are 
not geodesic, 
we must use 
the general definition 
of $\delta$-hyperbolic space 
in terms of 
the Gromov-product;  
recall that 
this states that 
a metric space~$X$  
is $\delta$-hyperbolic 
if and only if 
\[
(x\cdot y)_w\ge \min\{(x\cdot z)_w,(y\cdot z)_w\}-\delta 
\]
for all $w$, $x$, $y$, $z\in X$ 
\cite[III.H.1.20]{GMW319}.  

For our argument,  
we first 
need to 
be able to 
choose the orbit 
at our convenience, 
as the extent to which 
a flat subgroup of \flatrk $2$ 
`looks like' $\mathbb{Z}^2$ 
depends on the orbit. 
The following two results  
take care of that problem. 

\begin{lemma}\label{orbits_quasi-iso}
Let 
a group $\mathcal{A}$ 
act by isometries on a metric space $\mathbf{B}$.  
Then 
any two orbits 
of $\mathcal{A}$ 
in $\mathbf{B}$ 
are $(1,\epsilon)$-quasi-isometric, 
with $\epsilon$ only depending on 
the pair of orbits.   
\end{lemma}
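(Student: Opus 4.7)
The plan is to fix basepoints $x$ in the first orbit and $y$ in the second, and to construct an explicit $(1,\epsilon)$-quasi-isometry with $\epsilon:=2d(x,y)$. For each point $z\in\mathcal{A}.x$ I would choose once and for all some $a_z\in\mathcal{A}$ with $a_z.x=z$, and define $f\colon \mathcal{A}.x\to\mathcal{A}.y$ by $f(z):=a_z.y$. The map is non-canonical, because the $\mathcal{A}$-stabilizer of $x$ need not fix $y$, but the key observation is that every such choice is automatically a $(1,\epsilon)$-quasi-isometry.

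For the additive distortion bound I would rewrite, using the isometric action,
\[
d(f(z_1),f(z_2))=d(y,a_{z_1}^{-1}a_{z_2}.y)\quad\text{and}\quad d(z_1,z_2)=d(x,a_{z_1}^{-1}a_{z_2}.x)\,,
\]
and then apply the triangle inequality to the quadrilateral with vertices $x$, $y$, $a_{z_1}^{-1}a_{z_2}.x$, $a_{z_1}^{-1}a_{z_2}.y$. Since $a_{z_1}^{-1}a_{z_2}$ is an isometry, the two \emph{transverse} sides of this quadrilateral both have length $d(x,y)$, and the inequality immediately yields $|d(f(z_1),f(z_2))-d(z_1,z_2)|\le 2d(x,y)=\epsilon$.

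For coarse surjectivity I would consider an arbitrary point $c.y\in\mathcal{A}.y$ and compare it with $f(c.x)=a_{c.x}.y$. Setting $b:=a_{c.x}^{-1}c$ one has $b.x=x$, and a triangle-inequality argument completely analogous to the previous one (using $d(b.y,b.x)=d(y,x)$ together with $b.x=x$) gives $d(c.y,a_{c.x}.y)\le 2d(x,y)=\epsilon$. Combined with the distortion estimate this establishes that $f$ is a $(1,\epsilon)$-quasi-isometry, with $\epsilon$ depending only on the chosen basepoints $x$ and $y$, hence only on the pair of orbits.

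There is no significant obstacle to carry out: the entire argument reduces to two applications of the triangle inequality combined with the fact that $\mathcal{A}$ acts by isometries. The only conceptual point worth noting is that one must be willing to work with the non-canonical map $f$ built from representative choices; the isometry of the action is precisely what ensures that the additive distortion is independent of those choices and controlled uniformly by $2d(x,y)$.
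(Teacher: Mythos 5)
Your proposal is correct and is essentially the same argument as the paper's: both fix basepoints, choose a representative $a_z$ with $a_z.x=z$ for each orbit point, define the map $z\mapsto a_z.y$, and then use two triangle-inequality estimates to get additive distortion and density both bounded by $2d(x,y)$. The only difference is cosmetic (you translate by $a_{z_1}^{-1}$ before estimating, while the paper estimates the quadrilateral directly).
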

\begin{proof}
Fix two points, 
$M$ and $N$ say, 
in $\mathbf{B}$. 
For every $P\in \mathcal{A}.M$ 
choose an element $\alpha_P\in\mathcal{A}$ 
such that $P=\alpha_P.M$.  
Once this choice is made, 
define a map 
$\tau_{M,N}\colon \mathcal{A}.M\to \mathcal{A}.N$ 
which maps $P$ to $\alpha_P.N$. 
Then, 
for $A$ and $B$ in $\mathcal{A}.M$ 
we have  
\begin{align*}
d(A,B) 
&\leq d(\alpha_A.M,\alpha_A.N)+d(\alpha_A.N,\alpha_B.N)+d(\alpha_B.N,\alpha_B.M)\\
&=d(\alpha_A.N,\alpha_B.N)+2 d(M,N)\,,
\end{align*}
and 
\begin{align*}
d(\alpha_A.N,\alpha_B.N)
&\leq d(\alpha_A.N,\alpha_A.M)+d(\alpha_A.M,\alpha_B.M)+d(\alpha_B.M,\alpha_B.N)\\
&= d(A,B) + 2 d(M,N)
\end{align*}
hence 
$|d(\tau_{M,N}(A),\tau_{M,N}(B))-d(A,B)|\leq 2 d(M,N)$, 
which shows that 
$\tau_{M,N}$ is a $(1,2 d(M,N))$-quasi-isometric embedding. 
Further, 
we have 
\begin{align*}
d(\tau_{M,N}&(\alpha.M),\alpha.N) =
d(\alpha_{\alpha.M}.N,\alpha.N)\\
&\leq  d(\alpha_{\alpha.M}.N,\alpha_{\alpha.M}.M)+
d(\alpha_{\alpha.M}.M,\alpha.M)+
d(\alpha.M,\alpha.N)\\
&=  2d(M,N)\,,
\end{align*}
showing that 
$\tau_{M,N}$ is $2 d(M,N)$-dense. 
\qed 
\end{proof}

\begin{corollary}\label{some-orbit_hyp=all-orbits_hyp}
If one orbit of $\mathcal{A}$ 
in $\bs G.$ is hyperbolic, 
then all are. 
\qed 
\end{corollary}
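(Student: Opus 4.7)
The plan is to deduce the corollary from Lemma~\ref{orbits_quasi-iso} together with the fact that Gromov-hyperbolicity, in the Gromov-product formulation recalled immediately before that lemma, is preserved under $(1,\epsilon)$-quasi-isometric embeddings. Given two orbits $\mathcal{A}.M$ and $\mathcal{A}.N$ with $\mathcal{A}.M$ assumed to be $\delta$-hyperbolic, Lemma~\ref{orbits_quasi-iso} produces a $(1,\epsilon)$-quasi-isometric embedding $\tau\colon \mathcal{A}.N\to \mathcal{A}.M$ with constant $\epsilon\ge 0$ depending only on the pair of orbits.

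The key step is then to transfer the small metric distortion to a correspondingly small distortion of Gromov products. Starting from the identity
\[
2(x\cdot y)_w = d(x,w)+d(y,w)-d(x,y)
\]
and applying $|d(\tau(a),\tau(b))-d(a,b)|\leq\epsilon$ three times, I would obtain
\[
\bigl|(\tau(x)\cdot\tau(y))_{\tau(w)} - (x\cdot y)_w\bigr| \leq \tfrac{3\epsilon}{2}
\]
for all $w,x,y\in \mathcal{A}.N$. Substituting the images of an arbitrary quadruple under $\tau$ into the defining $\delta$-hyperbolicity inequality for $\mathcal{A}.M$, and then translating each of the three Gromov products back to $\mathcal{A}.N$ by means of this bound, shows that $\mathcal{A}.N$ is $(\delta+3\epsilon)$-hyperbolic, which is exactly what is needed.

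The only potential obstacle I foresee is that the orbits are not assumed to be geodesic metric spaces, so the standard form of quasi-isometry invariance of hyperbolicity for geodesic spaces (which proceeds via fellow-travelling of quasi-geodesics and geodesics) is not directly available here. Restricting to $(1,\epsilon)$-quasi-isometries rather than general $(\lambda,c)$-quasi-isometries circumvents this difficulty cleanly, because there is no multiplicative distortion to carry through the Gromov-product identity, so the elementary computation above suffices. I therefore expect no substantial obstacle beyond the bookkeeping already indicated.
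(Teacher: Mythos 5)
Your argument is correct and is precisely the one the paper intends: the corollary follows by combining Lemma~\ref{orbits_quasi-iso} with Lemma~\ref{(1,-)-qi-embed preserves_hyp}, whose proof is exactly your computation that a $(1,\epsilon)$-quasi-isometric embedding distorts Gromov products by at most $3\epsilon/2$ and hence transfers $\delta$-hyperbolicity to $(\delta+3\epsilon)$-hyperbolicity. Your remark about avoiding the geodesic-space form of quasi-isometry invariance also matches the paper's stated reason for working with the Gromov-product definition.
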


\begin{lemma}\label{(1,-)-qi-embed preserves_hyp}
Let $f\colon \mathbf{B}_1\to \mathbf{B}_2$ 
be a 
$(1,\epsilon)$-quasi-isometric embedding. 
If $\mathbf{B}_2$ is $\delta$-hyperbolic, 
then $\mathbf{B}_1$ is $(\delta+3\epsilon)$-hyperbolic. 
\end{lemma}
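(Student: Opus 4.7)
The plan is to transport the four-point Gromov-product condition from $\mathbf{B}_2$ back to $\mathbf{B}_1$ via the quasi-isometric embedding $f$, and to track carefully how the Gromov product distorts. The only ingredient beyond arithmetic is the defining inequality
\[
d(x,y)-\epsilon\le d(f(x),f(y))\le d(x,y)+\epsilon
\]
for a $(1,\epsilon)$-quasi-isometric embedding, together with the definition of the Gromov product
\[
(x\cdot y)_w=\tfrac{1}{2}\bigl(d(w,x)+d(w,y)-d(x,y)\bigr).
\]

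First, I would establish a Gromov-product distortion estimate. For any $w,x,y\in \mathbf{B}_1$, each of the three distances appearing in the Gromov product is perturbed by at most $\epsilon$ when $w,x,y$ are replaced by $f(w),f(x),f(y)$, so
\[
\bigl|(f(x)\cdot f(y))_{f(w)}-(x\cdot y)_w\bigr|\le\tfrac{3\epsilon}{2}.
\]

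Next, I would pick arbitrary points $w,x,y,z\in\mathbf{B}_1$ and apply $\delta$-hyperbolicity of $\mathbf{B}_2$ to the images $f(w),f(x),f(y),f(z)$, obtaining
\[
(f(x)\cdot f(y))_{f(w)}\ge\min\bigl\{(f(x)\cdot f(z))_{f(w)},(f(y)\cdot f(z))_{f(w)}\bigr\}-\delta.
\]
Substituting the distortion estimate on the left (losing $\tfrac{3\epsilon}{2}$) and on each of the two terms inside the minimum on the right (losing another $\tfrac{3\epsilon}{2}$, since the minimum of two quantities each perturbed by at most $\tfrac{3\epsilon}{2}$ is perturbed by at most $\tfrac{3\epsilon}{2}$) yields
\[
(x\cdot y)_w\ge\min\bigl\{(x\cdot z)_w,(y\cdot z)_w\bigr\}-\delta-3\epsilon,
\]
which is exactly the $(\delta+3\epsilon)$-hyperbolicity of $\mathbf{B}_1$.

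There is essentially no obstacle here: the argument is a direct bookkeeping calculation, and the constant $3\epsilon$ is forced by the fact that two Gromov products enter the inequality and each carries a distortion of $\tfrac{3\epsilon}{2}$. The only point requiring mild care is noticing that the minimum of two $\tfrac{3\epsilon}{2}$-perturbed quantities is itself $\tfrac{3\epsilon}{2}$-close to the minimum of the unperturbed ones, which is immediate.
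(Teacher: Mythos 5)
Your proposal is correct and follows essentially the same route as the paper: both establish the $\tfrac{3\epsilon}{2}$ distortion bound on the Gromov product from the $(1,\epsilon)$-embedding inequality and then transport the four-point condition, losing $\tfrac{3\epsilon}{2}$ on each side for a total of $3\epsilon$. The paper's version is just a more compressed statement of the identical bookkeeping.
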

\begin{proof}
From our assumption 
$
|d(x,y) - d(f(x),f(y))|\leq \epsilon
$
for any pair, $x$ and $y$, 
of points in $\mathbf{B}_1$, 
we infer that 
\[
|(x\cdot y)_o - (f(x)\cdot f(y))_{f(o)}|\leq 3\epsilon/2\,. 
\]
We conclude that 
$(x\cdot z)_o-\min\{(x\cdot y)_o,(y\cdot z)_o\}$
differs from 
$$(f(x)\cdot f(z))_{f(o)}-\min\{(f(x)\cdot f(y))_{f(o)},(f(y)\cdot f(z))_{f(o)}\}
$$ 
by at most $3\epsilon$. 
\qed
\end{proof}

Taking 
$f$ in Lemma~\ref{(1,-)-qi-embed preserves_hyp}
to be the inclusion of a subset 
we also obtain 
the following corollary. 

\begin{corollary}\label{subspace(hyp)=hyp}
Every subspace of a hyperbolic space is hyperbolic. 
\qed
\end{corollary}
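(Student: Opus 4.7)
The plan is to invoke Lemma~\ref{(1,-)-qi-embed preserves_hyp} with $f$ taken to be the inclusion of the subspace. Given a subspace $Y$ of a $\delta$-hyperbolic metric space $X$, the inclusion map $\iota\colon Y\hookrightarrow X$ preserves distances exactly, hence is a $(1,\epsilon)$-quasi-isometric embedding with $\epsilon = 0$. The lemma then immediately yields that $Y$ is $(\delta + 3\cdot 0) = \delta$-hyperbolic.

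There is no real obstacle here: the corollary is essentially just the $\epsilon = 0$ case of the preceding lemma, recorded separately for convenient reference. If one preferred not to route through Lemma~\ref{(1,-)-qi-embed preserves_hyp}, one could argue even more directly by observing that the Gromov product of points of $Y$ computed with the restricted metric coincides with their Gromov product in $X$, so that the four-point inequality
\[
(x\cdot y)_w \ge \min\{(x\cdot z)_w, (y\cdot z)_w\} - \delta
\]
for $w, x, y, z \in Y$ is inherited verbatim from the corresponding inequality in $X$. Either route delivers the claim in a single line.
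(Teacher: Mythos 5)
Your proof is correct and is exactly the paper's argument: the corollary is obtained by applying Lemma~\ref{(1,-)-qi-embed preserves_hyp} to the inclusion map, which is a $(1,0)$-quasi-isometric embedding. Your additional remark that the Gromov product is computed identically in the subspace, so the four-point inequality is inherited verbatim, is also valid (and is why the statement holds even though subspaces of geodesic spaces need not be geodesic).
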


Corollaries~\ref{some-orbit_hyp=all-orbits_hyp} and~\ref{subspace(hyp)=hyp}  
will prove Theorem~\ref{thm:altMainThm}, 
once 
we show that 
the presence of 
a flat group 
of rank~$r\ge 2$ 
implies 
the existence of 
a subspace 
of~$\bs G.$ 
that is 
an 
integer lattice 
in a normed space 
of dimension~$r$. 
For completeness, 
the proof 
that 
such a lattice 
is not hyperbolic 
is outlined 
after 
the next result.

\begin{lemma}\label{norm>flat-gp}
Let $\mathcal{H}$ be a flat group of automorphisms 
of the totally disconnected locally compact group $G$ with rank~$r$. 
Let $O$ be minimizing for $\mathcal{H}$. 
Then $\left(\mathcal{H}.O,d\right)$ 
is isometric to 
a lattice in 
in a normed 
real linear space 
of dimension~$r$. 
\end{lemma}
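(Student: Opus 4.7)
The plan is to identify the orbit $\mathcal{H}.O$ with the free abelian group $\mathcal{H}/\mathcal{H}_1\cong\mathbb{Z}^r$ furnished by Theorem~\ref{thm:flat}, and then to exhibit the restriction of $d$ as the restriction to this lattice of a translation-invariant norm on the ambient~$\mathbb{R}^r$.

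First I would note that the stabiliser of $O$ under the action of $\mathcal{H}$ on $\bs G.$ is precisely $\mathcal{H}_1$, so the orbit map induces a bijection $\phi\colon\mathcal{H}/\mathcal{H}_1\to\mathcal{H}.O$. Because every automorphism of $G$ preserves the index of any pair of compact open subgroups, $\mathcal{H}$ acts by isometries on $(\bs G., d)$, and in particular $d$ is left-$\mathcal{H}$-invariant on $\mathcal{H}.O$. This lets me define $\|\alpha\mathcal{H}_1\|:=d(\alpha(O),O)$, which depends only on the coset, and for which $d(\alpha(O),\beta(O))=\|\beta^{-1}\alpha\,\mathcal{H}_1\|$.

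The main task is to verify that $\|\cdot\|$ is a norm on the abelian group $\mathcal{H}/\mathcal{H}_1$. Positive definiteness is immediate from the definition of $\mathcal{H}_1$, and the triangle inequality transfers from $d$ using isometry invariance:
\[
\|\alpha\beta\,\mathcal{H}_1\|=d(\alpha\beta(O),O)\le d(\alpha\beta(O),\alpha(O))+d(\alpha(O),O)=\|\beta\,\mathcal{H}_1\|+\|\alpha\,\mathcal{H}_1\|.
\]
The real content lies in homogeneity, $\|n\cdot\alpha\mathcal{H}_1\|=|n|\,\|\alpha\mathcal{H}_1\|$ for $n\in\mathbb{Z}$. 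Since $O$ is minimising for every element of $\mathcal{H}$, it is in particular minimising for every power $\alpha^n$. Expressing $d$ via the indices $|\alpha^n(O):\alpha^n(O)\cap O|$ and $|O:\alpha^n(O)\cap O|$ and invoking the standard multiplicativity of the scale on powers of a single element with a minimising $O$ (namely $s_G(\alpha^n)=s_G(\alpha)^n$ for $n\ge 0$, and the symmetric statement for $\alpha^{-1}$) yields $d(\alpha^n(O),O)=n\,d(\alpha(O),O)$ for $n\ge 0$. The identity $\|\alpha^{-1}\mathcal{H}_1\|=\|\alpha\mathcal{H}_1\|$, immediate from the symmetry of $d$ combined with isometry invariance, extends this to all $n\in\mathbb{Z}$.

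Once $\|\cdot\|$ is established as a norm on $\mathcal{H}/\mathcal{H}_1\cong\mathbb{Z}^r$, I would extend it first to $\mathbb{Q}^r$ by $\|(p/q)x\|:=(|p|/q)\|x\|$ and then by continuity to $\mathbb{R}^r$; continuity on $\mathbb{Q}^r$ is automatic since the triangle inequality bounds $\|\cdot\|$ above by a multiple of the $\ell^1$-norm in any $\mathbb{Z}$-basis of $\mathcal{H}/\mathcal{H}_1$. The embedding $\mathcal{H}.O\hookrightarrow(\mathbb{R}^r,\|\cdot\|)$ is then an isometry onto the integer lattice spanned by that basis. The principal obstacle is the homogeneity step, but it should reduce cleanly to the multiplicativity of the scale function on powers of a single element for which $O$ is minimising, which is a foundational fact in the tidy-subgroup theory.
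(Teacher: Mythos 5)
Your reduction to a ``norm'' on $\mathcal{H}/\mathcal{H}_1\cong\mathbb{Z}^r$ is fine as far as it goes: the identification of the orbit with the quotient, translation-invariance, the triangle inequality, symmetry, positive-definiteness on the lattice, and the $\mathbb{Z}$-homogeneity $\|n\cdot x\|=|n|\,\|x\|$ (via $s_G(\alpha^n)=s_G(\alpha)^n$ for a minimizing $O$) are all correct and match the first part of the paper's proof. The gap is in the last step. Subadditivity, symmetry, positive-definiteness on $\mathbb{Z}^r$ and $\mathbb{Z}$-homogeneity do \emph{not} imply that the continuous homogeneous extension to $\mathbb{R}^r$ is a norm: the function $\|(m,n)\|:=|m\sqrt{2}-n|$ on $\mathbb{Z}^2$ satisfies every property you verify, yet its extension $|x\sqrt{2}-y|$ is a degenerate seminorm vanishing on the line $y=x\sqrt{2}$, and $(\mathbb{Z}^2,\|\cdot\|)$ is isometric to the dense subgroup $\mathbb{Z}+\mathbb{Z}\sqrt{2}$ of $\mathbb{R}$ --- not to a lattice in any $2$-dimensional normed space. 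Your Lipschitz bound by the $\ell^1$-norm gives only the upper estimate needed for the extension to exist; it gives no lower estimate, so positive-definiteness of the extension (equivalently, discreteness and cocompactness of the image of $\mathbb{Z}^r$) is exactly the point left unproved, and it is not a formality.

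The paper closes this gap by not arguing abstractly at all: it invokes Theorems~6.12 and~6.14 of~\cite{tidy<:commAut(tdlcG)} to write
$d(\alpha(O),O)=\sum_{\rho\in\Phi}\log(t_\rho)\,|\rho(\alpha)|$
for a finite set $\Phi$ of \emph{integer-valued} homomorphisms $\rho\colon\mathcal{H}\to\mathbb{Z}$ whose kernels intersect in $\mathcal{H}_1$. The induced linear functionals on $\mathbb{R}^r$ are given by integer vectors $\mathbf{z}_\rho$, so their common kernel is a rational subspace meeting $\mathbb{Z}^r$ trivially, hence is zero, and the displayed formula manifestly defines a genuine norm on $\mathbb{R}^r$. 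If you want to salvage your more abstract route, you would need an additional input ruling out degeneration --- for instance, that $d$ on $\mathcal{B}(G)$ takes values in $\{\log k:k\in\mathbb{Z}_{\geq 1}\}$, so $\mathcal{H}.O$ is uniformly discrete; one can then argue that a nontrivial null space $N$ of the extended seminorm would force the rank-$r$ group $\mathbb{Z}^r$ to embed discretely in $\mathbb{R}^r/N$ of dimension $<r$, a contradiction. As written, your proof asserts the conclusion at the step where the real work lies.
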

\begin{proof}
Since $O$ is minimizing for $\mathcal{H}$, an automorphism $\alpha\in\mathcal{H}$ satisfies $\alpha.O = O$ if and only if $\alpha\in \mathcal{H}_1$ and the map $\alpha\mapsto \alpha.O$ induces a bijection
$$
\mathcal{H}.O \to \mathcal{H}/\mathcal{H}_1.
$$
Composition with the isomorphism $\mathcal{H}/\mathcal{H}_1\to \mathbb{Z}^r$ then produces a bijection $\mathcal{H}.O \to \mathbb{Z}^r$. The metric on $\mathcal{H}.O$ pushes forward to $\mathbb{Z}^r$ via this bijection and the resulting metric on $\mathbb{Z}^r$ is translation-invariant because $\mathcal{H}/\mathcal{H}_1$ and $\mathbb{Z}^r$ are isomorphic as groups. 

An explicit formula may be given
for the distance $d(O,\alpha.O)$ for $\alpha\in \mathcal{H}$.  
There are: a finite set $\Phi=\Phi(\mathcal{H},G)$ 
of surjective homomorphisms
$\rho\colon  {\mathcal H} \to {\mathbb Z}$ 
such that 
the intersection of 
the kernels of elements in $\Phi$ 
equals $\mathcal{H}_1$; 
and a set $\left\{t_\rho\mid \rho\in\Phi\right\}$ of 
integers  greater than one
such that
\[
s_G(\alpha) = \prod_{\rho\in\Phi,\,\rho(\alpha)>0} t_\rho^{\rho(\alpha)}\,,\ (\alpha\in\mathcal{H}),
\]
see~\cite[Theorems 6.12 and 6.14]{tidy<:commAut(tdlcG)}. 
Since $O$ is minimizing for $\mathcal{H}$, 
we further have that
$d(\alpha(O),O)=\log\bigl(s_G(\alpha)\cdot s_G(\alpha^{-1})\bigr)$, 
whence 
\begin{equation*}
\label{eq:normform}
d(\alpha(O),O) = \sum_{\rho\in\Phi} \log(t_\rho)\,|\rho(\alpha)|\,, \ (\alpha\in\mathcal{H}).
\end{equation*}
Composition of each $\rho\in\Phi$ with the isomorphism $\mathcal{H}/\mathcal{H}_1\to \mathbb{Z}^r$ yields  homomorphisms $\tilde\rho : \mathbb{Z}^r \to \mathbb{Z}$ and there
are $\mathbf{z}_\rho\in \mathbb{Z}^r$ such that $\tilde\rho(\mathbf{z}) = \mathbf{z}_\rho.\mathbf{z}$ for each  $\rho\in\Phi$. Hence the bijection $\mathcal{H}.O \to \mathbb{Z}^r$ becomes an isometry if $\mathbb{Z}^r$ is equipped with the translation-invariant metric
\begin{equation}
\label{eq:normform2}
\tilde d(\mathbf{w},\mathbf{z}) := \sum_{\rho\in\Phi} \log(t_\rho)\,| \mathbf{z}_\rho.(\mathbf{w}-\mathbf{z})|\,, \ (\mathbf{w},\,\mathbf{z}\in\mathbb{Z}^r).
\end{equation}
This metric extends to $\mathbb{R}^r$ by the same formula.  
\qed
\end{proof}

We conclude with 
a sketch of 
the argument 
that 
a lattice~$X$   
in a normed space
of dimension~$r\ge 2$ 
is not hyperbolic. 
Given $4$ points ${x}$, ${y}$, ${z}$ and ${w}$ 
in~$X$ 
let $\delta({x},{y},{z})_{w}$ denote 
the quantity 
$ 
\min\{
({y}\cdot {x})_{w},
({x}\cdot {z})_{w} 
\}
-({y}\cdot {z})_{w}
$. 
The number $\delta({x},{y},{z})_{w}$ 
is a lower bound for any $\delta$ such that 
$X$ could be $\delta$-hyperbolic. 
But $\delta(
\lambda{x},\lambda{y},\lambda{z}
)_{\lambda{w}}=
|\lambda|\delta({x},{y},{z})_{w}$ 
for any $\lambda\in\mathbb{Z}$, 
showing that 
no such $\delta$ can exist, 
if we can find a quadruple $({x}, {y}, {z}, {w})$ 
such that  $\delta({x},{y},{z})_{w}$ 
is positive. 
If~$x$ and~$y$ 
are vectors 
for which 
$\|x+y\|+\|x-y\|>\|x\|+\|y\|$ 
then 
$\delta({x},{y},{z})_{\mathbf{0}}$ is positive.  
Such vectors 
exist 
for any 
normed linear space 
of dimension 
at least~$2$; 
vectors 
in the lattice 
can be found 
by rational approximation 
followed by 
scaling by integers.

%




%

%

%

%

%


\end{document}